\newtheorem{theorem}{Theorem}[section]
\newtheorem{lemma}[theorem]{Lemma}
\newtheorem{proposition}[theorem]{Proposition}
\newtheorem{question}[theorem]{Question}
\newenvironment{definition}[1][Definition:]{\begin{trivlist}
\item[\hskip \labelsep {\bfseries #1}]}{\end{trivlist}}
\newenvironment{example}[1][Example:]{\begin{trivlist}
\item[\hskip \labelsep {\bfseries #1}]}{\end{trivlist}}
\newcommand{\R}{\mathbb R}
\newcommand{\C}{\mathcal C}
\newcommand{\V}{\mathcal V}
\newcommand{\U}{\mathcal U}
\newcommand{\od}{\stackrel{\text{def}}{=}}
\newcommand{\closure}{\overline}
\DeclareMathOperator{\supp}{supp}
\DeclareMathOperator{\interior}{int}
\DeclareMathOperator{\trim}{trim}
\DeclareMathOperator{\inflate}{inflate}
\title{Sparse neural codes and convexity}
\begin{document}

\author{R. Amzi Jeffs, Mohamed Omar, Natchanon Suaysom, Aleina Wachtel, Nora Youngs}
\address{Department of Mathematics.  Harvey Mudd College, Claremont, CA 91711}
\email{rjeffs@g.hmc.edu, omar@g.hmc.edu, nsuaysom@g.hmc.edu, awachtel@g.hmc.edu, nyoungs@g.hmc.edu}
%
%
%
%

\subjclass[2010]{92C20,52A35,05C62}

\begin{abstract} 
Determining how the brain stores information is one of the most pressing problems in neuroscience.  In many instances, the collection of stimuli for a given neuron can be modeled by a convex set in $\mathbb{R}^d$.  Combinatorial objects known as \emph{neural codes} can then be used to extract features of the space covered by these convex regions.  We apply results from convex geometry to determine which neural codes can be realized by arrangements of open convex sets. We restrict our attention primarily to sparse codes in low dimensions.  We find that intersection-completeness characterizes realizable 2-sparse codes, and show that any realizable 2-sparse code has embedding dimension at most $3$. Furthermore, we prove that in $\R^2$ and $\R^3$, realizations of 2-sparse codes using closed sets are equivalent to those with open sets, and this allows us to provide some preliminary results on distinguishing which 2-sparse codes have embedding dimension at most $2$.
\end{abstract}

\maketitle

\section{Introduction}

A fundamental problem in convex geometry is to understand the intersection behavior of convex sets.  Classical theorems in this area include Helly's theorem and its many variations, which show that the presence of lower order intersections of convex sets in $\R^d$ can enforce intersections of higher order \cite{Matousek}.  Recent work of Tancer \cite{tancer} on the representability of simplicial complexes provides a sharp bound on the dimension in which intersection patterns of convex sets can be realized. We consider the problem of simultaneously realizing intersection patterns along with other relationships between convex sets, such as containment.  This problem is motivated by one of the challenges of mathematical neuroscience: determining how the structure of a stimulus space is represented in the brain.

Neurons respond to stimuli in an environment; the set of all such stimuli is called the {\bf stimulus space} $X$.  Usually, we consider $X\subset \R^d$.  If we are considering data from $n$ neurons $\{1,...,n\}$ which respond to stimuli in $X$, the {\bf receptive field} for neuron $i$ is the subset $U_i$ of the stimulus space $X$ for which neuron $i$ is highly responsive. Throughout this article, we assume the sets $U_i$ are convex.  Indeed, experimental data on many types of neurons, such as place cells \cite{OKeefeDostrovsky} or orientation-tuned neurons \cite{hubelwiesel}, make it evident that receptive fields are often very well approximated by convex sets. Hence, for such neurons, the regions of stimulus space in which multiple neurons fire can be modeled by intersections of convex sets.  Because of this, the mathematical theory developed by Helly, Tancer, and others can inform us about the possible arrangements of receptive fields in a given dimension.

Helly's theorem, however, cannot inform us about all types of receptive field arrangements.  For example, if  $U_i$, $U_j$ are receptive fields which properly intersect, the neural data will differentiate between $U_i\subset U_j$ and  $U_i\not\subset U_j$, but Helly's theorem merely notes that $U_i$ and $U_j$ intersect. We thus go beyond the usual scope of convex geometry to consider the problem of finding arrangements of convex sets which fully realize the information present in the neural data, including containments. This problem was posed originally in \cite{Youngs2013}.  In order to address this issue, we must first describe how neural data is represented mathematically.

\begin{definition}\label{def:neuralcode}
A {\bf neural code} on $n$ neurons is a set of binary firing patterns $\C\subset\{0,1\}^n$, representing neural activity. Elements of $\C$ are referred to as {\bf codewords}.
\end{definition}

The firing of a neuron is an all-or-nothing event, and so a codeword $c \in \C$ represents a data point in which a specific set of neurons are simultaneously firing; with neuron $i$ active if $c_i=1$ and inactive if $c_i=0$.  For instance, the codeword $0011$ represents a data point at which neurons 3 and 4 were active, while neurons 1 and 2 were not.  In the receptive field context, the presence of this codeword in $\C$ indicates that $(U_3 \cap U_4 ) \backslash (U_1\cup U_2) \neq \emptyset$.

\begin{definition}\label{def:realization}

Let $\U = \{U_1,\ldots,U_n\}$ be a collection of sets in $\R^d$.
The {\bf associated neural code} $\C(\U) \subset \{0,1\}^n$ is the set of firing patterns representing the regions in the arrangement: 
$$\C(\U) \od \left \{ c \in \{0,1\}^n \ \bigg | \ \left(\bigcap_{c_i = 1} U_i \right) \setminus \left(\bigcup_{c_j = 0} U_j \right) \neq \emptyset \right \}.$$
\end{definition}

Any collection of sets $\U$ in $\R^d$ can give rise to an associated neural code.  However, as we have mentioned, the receptive fields $U_i$ are generally presumed to be convex. One of our main motivating example is that of place cells, whose receptive fields are generally seen to be convex as explained in \cite{Youngs2015}. 
  We additionally assume the receptive fields $U_i$ are open, since by restricting to open sets, we force all sets in our realization to be full-dimensional; furthermore, their intersections, if nonempty, must also be full-dimensional, which corresponds to non-degenerate cases in reality. These assumptions are consistent with the literature \cite{Youngs2015,  Youngs2013, Shiu2015}. However, many of our proofs will require that we shift between closed and open convex sets which are associated to the same code. We therefore make the following definition: \\

\begin{definition}
If  $\U=\{U_1,...,U_n\}$ is a collection of open (respectively, closed) convex sets in $\R^d$ for which $\C = \C(\U)$, then we say that $\C$ is \textbf{open (closed) convex realizable in }$\R^d$, and that $\U$ is an \textbf{open (closed) convex realization} of $\C$.

Then, for any code $\C$, we define $d(\C)$ to be the minimum dimension $d$ such that $\C$ has an open convex realization in $\R^d$, if such a dimension $d$ exists.  If $\C$ is not realizable with open convex sets in {\it any} dimension, we say $d(\C) = \infty$. Such codes do exist; see Figure~\ref{fig:pathargument}.
\end{definition}
%

\begin{figure}[h]
\includegraphics[scale=.25]{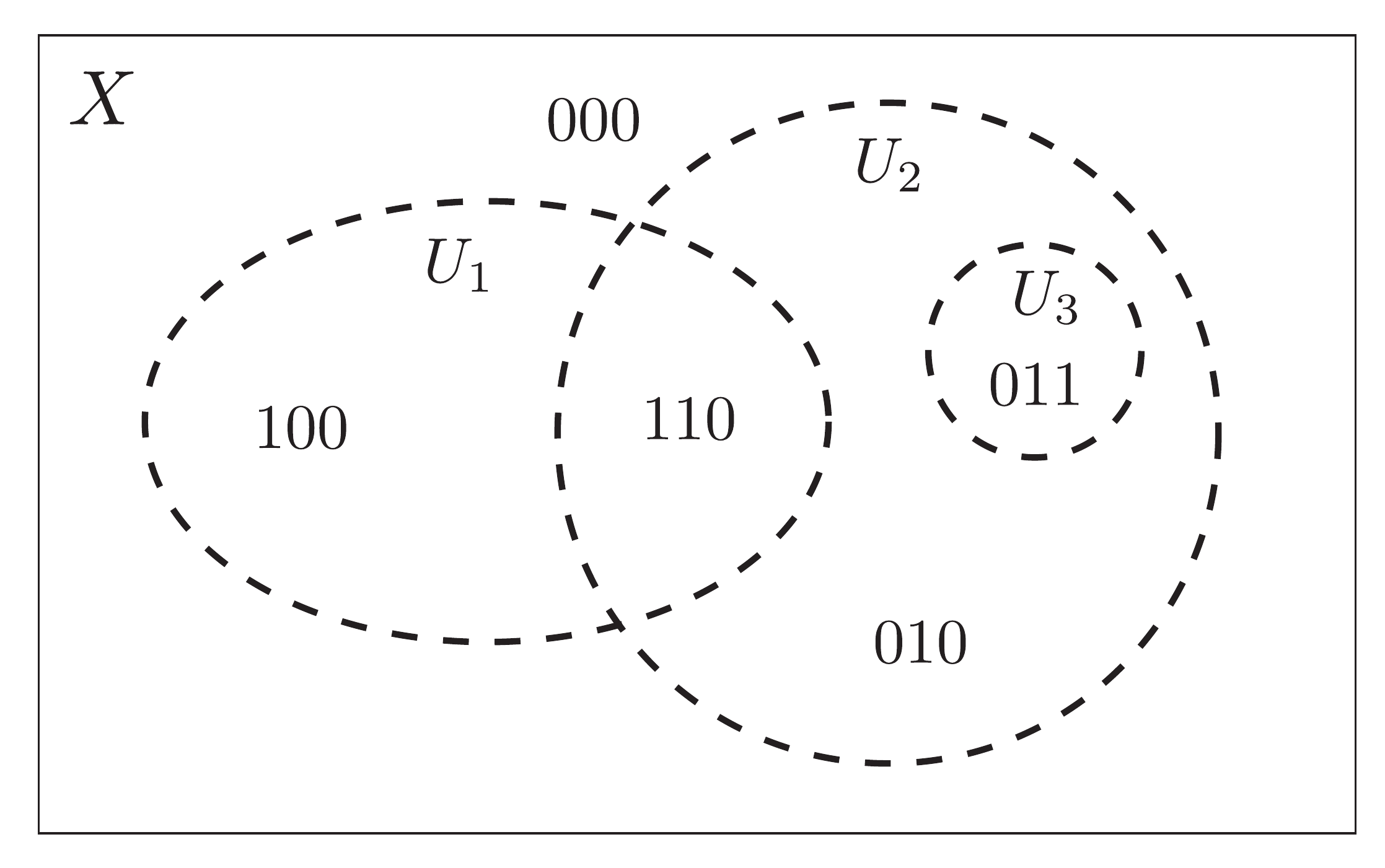}
\caption{An open convex realization of the code $\C = \{000, 100, 010, 110, 011\}$ in $\R^2$, with each region labelled with its corresponding codeword.  This shows $d(\C)\leq 2$.  It can be shown that, in fact, $d(\C)=1$. } \label{fig:realization}
\end{figure}

\begin{figure}[h]
\includegraphics[scale=.25]{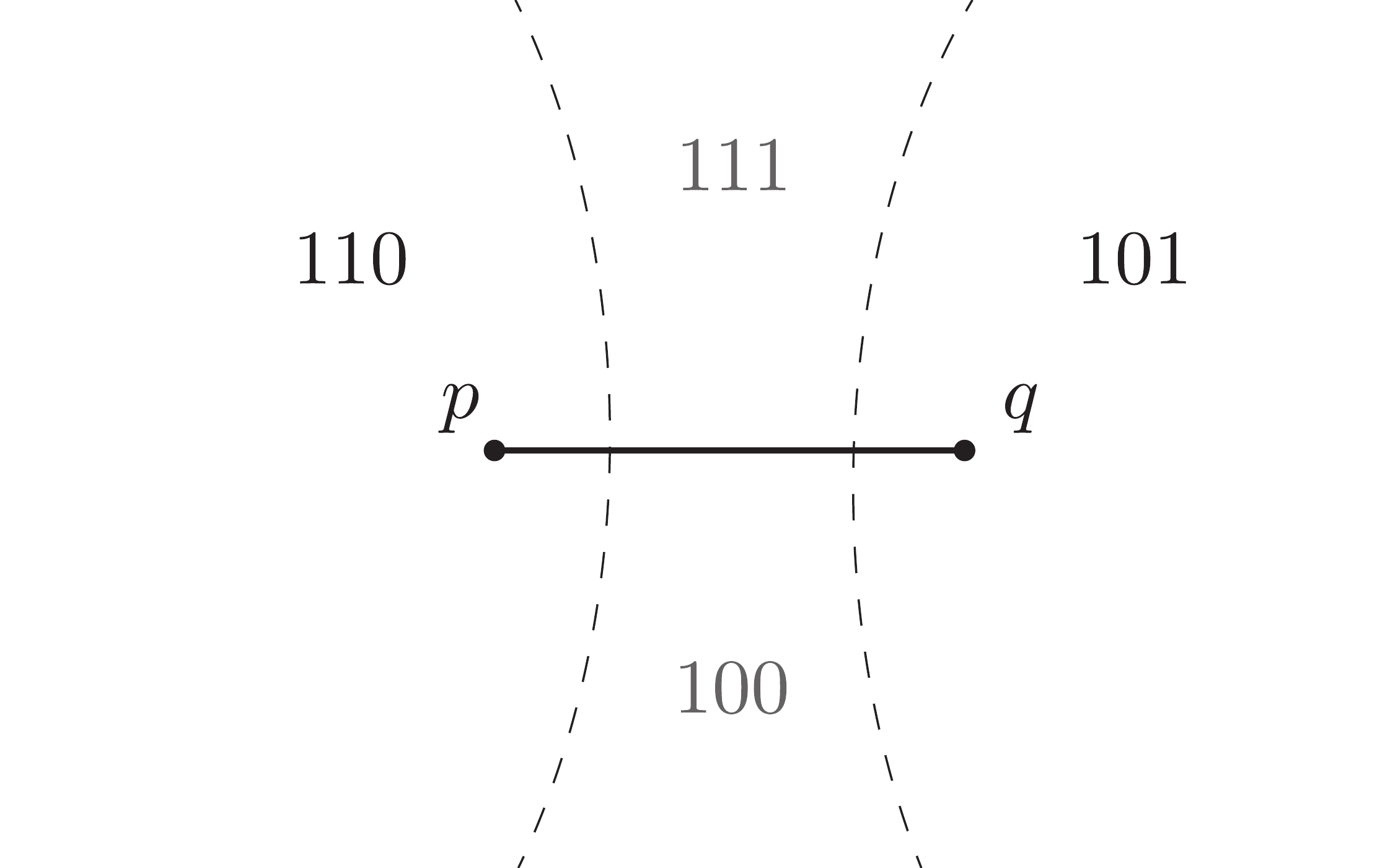}
\caption{ The code $\C = \{000, 010, 001, 110,  101\}$ is not open convex realizable in $\R^d$ for any $d<\infty$. If it were, we could pick points $p\in (U_1\cap U_2)\backslash U_3$ and $q\in (U_1\cap U_3)\backslash U_2$.  The line segment $\overline{pq}$ is contained in $U_1$ by convexity; to move from $p$ to $q$ along $\overline{pq}$, we must leave $U_2$ and enter $U_3$.  If we leave $U_2$ before entering $U_3$ that would indicate the presence of codeword 100, which is not in the code; if we enter $U_3$ before leaving $U_2$ that would indicate the codeword 111, which is not in the code. Since all sets are open, these are the only possibilities. } \label{fig:pathargument}
\end{figure}



%

\begin{definition}  The {\bf support} of a vector $c\in\{0,1\}^n$, denoted $\supp(c)$, is the set of indices of value 1, or the set of all firing neurons:
$$\supp (c) \od \{i \mid c_i =1 \}$$ 
The support of a code $\C\subset \{0,1\}^n$ is the set of the supports of its codewords: $$\supp (\C) \od \{\supp (c) \mid c \in \C \}.$$ 
\end{definition}

We assume that there are instances when none of the neurons of interest are firing; hence, we will always assume that the codeword $00\cdots 0$ is present in any code.\\

\begin{example}
Let $\C = \{000, 101, 110, 111\}$. Then $\supp(101)=\{1,3\}$, $\supp(111)=\{1,2,3\}$, and $\supp(\C)=\{\emptyset,\{1,3\},\{1,2\},\{1,2,3\}\}$.\\
\end{example}


Recent work, for example that of Miesenb\"ock et al. \cite{SparseCoding} shows the utility and importance of sparsity in neural codes.  For practical reasons, our definition of `sparse' differs slightly from the usual low average weight definition often used in coding literature (see for example \cite{Walker2013}) and we use instead a low maximum weight definition, as defined here.

\begin{definition}\label{def:sparse}
A code $\C$ is \textbf{$k$-sparse} if $|\supp(c)| \le k$ for all $c\in \C$.
\end{definition}

We begin the program of studying $k$-sparse code by focusing on $2$-sparse codes, where there is already rich mathematics to be found.  Our fundamental motivating questions are the following:

\begin{question}
Which 2-sparse codes are open convex realizable? 
\end{question}

\begin{question}
If $\C$ is an open convex realizable 2-sparse code, what is its minimum embedding dimension $d(\C)$?
\end{question}

In Section 2, we will answer the first question entirely. In Theorem~\ref{thm:R3} we show that 2-sparse codes are open convex realizable exactly when they are closed under intersection.  In the process, we show in Lemma~\ref{lem:openvsclosed} that for such codes it is equivalent to find a closed convex realization, as it may be transformed to an open convex realization in $\R^2$ or $\R^3$.   It immediately follows from this and work of Tancer \cite{tancer} that any 2-sparse code has a convex open realization in $\R^3$.  In Section 3, we exhibit a class of 2-sparse codes with $d\leq 2$, and  as well as a class with $d=3$.

\section{Equivalence with graphs}~\label{sec:topological}
This section is dedicated to proving Theorem \ref{thm:R3}, which establishes that a $2$-sparse code is realizable exactly when its support is closed under intersection, and for such codes $\C$, $d(\C) \leq 3$.  An important construct in proving this theorem is that of simplicial complexes.

In order to prove this theorem, we make use of the simplicial complex of a code, which is introduced below.

\begin{definition}\label{def:simplicialcomplex}
A {\bf simplicial complex} on a finite set $S$ is a family $\Delta$ of subsets of $S$ such that if $X\in \Delta$ and $Y \subseteq X$, then $Y \in \Delta$.
\end{definition}

Often, the set $S$ under consideration will be $[n] = \{1,...,n\}$.  In a situation where $S=\{v_1,...,v_n\}$, we will typically refer to any set in a simplicial complex on $S$ by its set of indices.

\begin{definition} The {\bf simplicial complex of a code} $\C$ is the smallest simplicial complex containing $\supp(\C)$; this is denoted $\Delta(\C)$.
%
%
%
The \textbf{$k$-skeleton} of a simplicial complex $\Delta$ is the simplicial complex $\Delta_k$ given by the collection of sets in $\Delta$ of size at most $k + 1$.
\end{definition}

\begin{figure}[h]
\includegraphics[scale=.45]{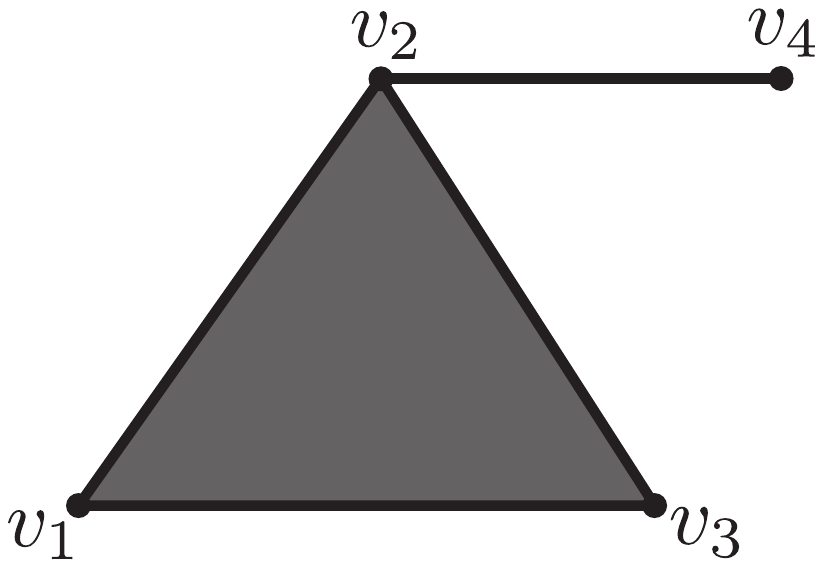}\hspace{1in} \includegraphics[scale=.45]{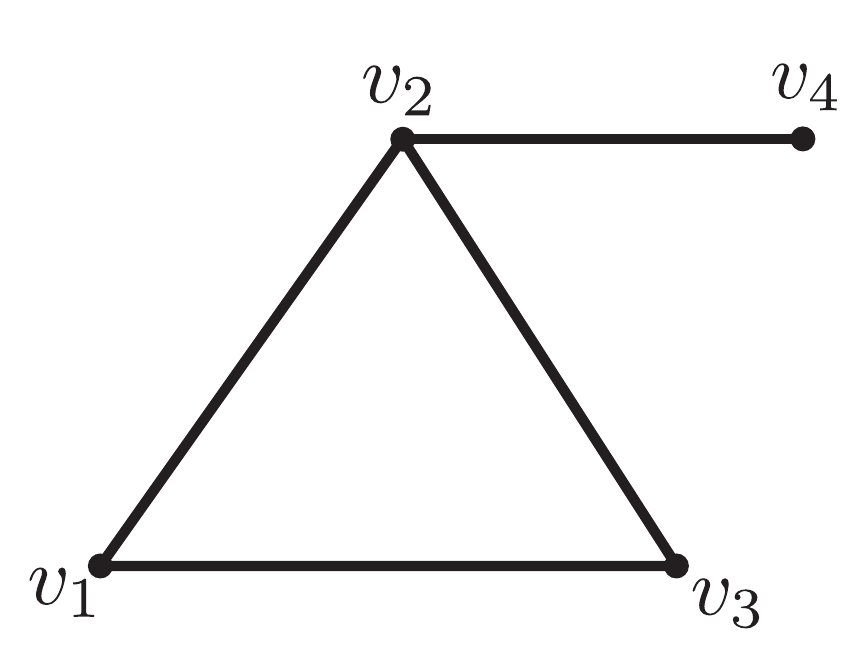}
\caption{At left, a geometric representation of  simplicial complex on $S=\{v_1,v_2,v_3,v_4\}$ with $\Delta = \{ \emptyset, \{1\}, \{2\}, \{3\}, \{4\}, \{1,2\}, \{1,3\}, \{2,3\}, \{2,4\}, \{1,2,3\} \}$. At right, a geometric representation of the 1-skeleton of $\Delta$} 
\end{figure}


If $\C$ is 2-sparse, then $\Delta(\C)$ consists only of $0, 1$, and $2$ element sets.  We can therefore think of $\Delta(\C)$ as a graph, with 1-element sets corresponding to vertices and $2$-element sets as edges between them.  Note that since $\Delta(\C)$ is a simplicial complex, if $\{i,j\}\in \Delta(\C)$, then both $\{i\}$ and $\{j\}$ must be in $\Delta(\C)$ as well; hence this association is well-defined.  The formal relationship between $2$-sparse codes and graphs is captured by the following definition.

 \begin{definition}\label{def:codegraph}
 Let $\C\subset\{0,1\}^n$ be a neural code. The \textbf{graph of $\C$}, denoted $G_\C$, is the graph whose vertex set is $[n]$, with $i$ adjacent to $j$ if $\{i,j\} \in \supp(\C)$.
  \end{definition} 
  
Note that $G_\C$ is essentially the 1-skeleton of $\Delta(\C)$.  In particular, for a 2-sparse code, $\Delta(\C)$ and $G_\C$ contain exactly the same information because $\Delta(\C)$ is equal to its 1-skeleton.

\begin{figure}[h]
\includegraphics[scale=1]{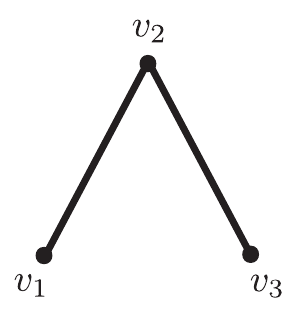}
\caption{The graph $G_{\C}$ for $\C = \{000, 100, 010, 110, 011\}$; see Figure \ref{fig:realization} for a realization of $\C$.}\label{fig:graph}
\end{figure}

  As we saw in Figure \ref{fig:pathargument}, there exist 2-sparse codes which are not convex in any dimension. The following lemma generalizes the obstruction presented in that figure.
  \begin{lemma}\label{lem:intersectioncomplete}
  Let $\C$ be a 2-sparse code. If $\C$ has a convex open realization in any dimension, then $\supp(\C)$ is closed under intersection.
  \end{lemma}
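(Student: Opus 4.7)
The plan is to reduce to a single nontrivial case and finish with a short connectedness argument. Since $00\cdots 0 \in \mathcal{C}$ we always have $\emptyset \in \supp(\mathcal{C})$, and intersections involving a singleton or equal supports are trivially back in $\supp(\mathcal{C})$. Because $\mathcal{C}$ is 2-sparse, the only intersection of two elements of $\supp(\mathcal{C})$ that is not immediately present is of the form $\{i,j\} \cap \{i,k\} = \{i\}$ with $j \neq k$. So the whole lemma reduces to: assuming $\{i,j\}, \{i,k\} \in \supp(\mathcal{C})$, produce a point of $U_i$ lying in no other $U_\ell$.

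Let $\mathcal{U} = \{U_1, \ldots, U_n\}$ be a convex open realization of $\mathcal{C}$, and for each $\ell \neq i$ set $V_\ell \od U_i \cap U_\ell$. Each $V_\ell$ is open in $U_i$. The key observation is that the family $\{V_\ell : \ell \neq i\}$ is pairwise disjoint: if some $x$ lay in $V_\ell \cap V_m$ with $\ell \neq m$ and $\ell, m \neq i$, then $x \in U_i \cap U_\ell \cap U_m$, producing a codeword whose support contains the 3-element set $\{i,\ell,m\}$, contradicting 2-sparsity. Meanwhile, the assumption $\{i,j\}, \{i,k\} \in \supp(\mathcal{C})$ ensures that both $V_j$ and $V_k$ are nonempty.

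Now $U_i$ is open and convex, hence connected. If the $V_\ell$ covered $U_i$, we could write
\[
U_i \;=\; V_j \;\sqcup\; \bigcup_{\ell \neq i,\,j} V_\ell,
\]
a partition of $U_i$ into two disjoint open subsets, each nonempty (the second contains $V_k$), contradicting connectedness. Therefore some $x \in U_i$ fails to lie in any $U_\ell$ with $\ell \neq i$, and this $x$ witnesses $\{i\} \in \supp(\mathcal{C})$.

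I do not expect a real obstacle: once one notices that 2-sparsity forces the sets $U_i \cap U_\ell$ (for $\ell \neq i$) to be pairwise disjoint inside $U_i$, the standard fact that a connected set is not a disjoint union of two nonempty open pieces does all the work. This is essentially a clean reformulation of the path-tracing argument illustrated in Figure~\ref{fig:pathargument}, but phrased in a way that avoids having to analyze the order in which a segment crosses boundaries.
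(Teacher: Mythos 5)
Your proof is correct and is essentially the paper's argument: both reduce to the case $\{i,j\}\cap\{i,k\}=\{i\}$, observe that 2-sparsity makes the sets $U_i\cap U_\ell$ pairwise disjoint and open, and conclude by connectedness that they cannot cover. The only cosmetic difference is that you run the connectedness argument on all of $U_i$ rather than on the segment $\overline{pq}$ joining a point of $U_i\cap U_j$ to a point of $U_i\cap U_k$; both work since each set is convex.
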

  \begin{proof}
Suppose $\C$ is a 2-sparse code with open convex realization $\U = \{U_1,...,U_n\}$.  Since $\C$ is 2-sparse, $|\supp(c)|\in \{ 0,1,2\}$ for every $c\in \C$.  
If $|\supp(c)|=1$ is at most $1$, then $\supp(c)\cap \supp(c') \in \supp(\C)$ because it is either $\emptyset$ or $\supp(c)$.   It then remains to show that $\supp(c) \cap \supp(c') = \{i\} \in \supp(\C)$ when $\supp(c) = \{i,j\}$ and $\supp(c')=\{i,k\}$ with $j \neq k$.  In this case $U_i \cap U_j$ and $U_i \cap U_k$ are nonempty so there exists points $p\in U_i\cap U_j$, $q\in U_i \cap U_k$.  Consider the line segment $\overline{pq}$ connecting $p$ and $q$. Since $U_i$ is convex, $\overline{pq}$ is contained in $U_i$. For each $m\in [n]\backslash\{i\}$, consider the set $L_m = \overline{pq}\cap U_i\cap U_m$; note that any two such sets are disjoint, and that $L_j$ and $L_k$ are nonempty. If the sets $\{L_m\}$ partition the line $\overline{pq}$, then this would disconnect $\overline{pq}$ in the subspace topology, but as $\overline{pq}$ is connected, this is impossible.  Thus, there must be some point on $\overline{pq}$ which is contained in $U_i$ only. The existence of this point implies $\{i\} \in \supp(\C)$ as desired.
\end{proof}

\noindent The conclusion of the previous lemma is in fact a characterization of 2-sparse codes with an open convex realization.  This is the content of the next theorem, one of our main results:

\begin{theorem}\label{thm:R3}
A 2-sparse code $\C$ has an open convex realization if and only if $\supp(\C)$ is closed under intersection.  Furthermore, if $\C$ is realizable then $d(\C) \leq 3$.
\end{theorem}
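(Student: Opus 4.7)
The forward implication is already established by Lemma~\ref{lem:intersectioncomplete}, so I focus on the converse together with the dimension bound. Assuming $\supp(\C)$ is closed under intersection, I aim to construct an open convex realization in $\R^3$. By Lemma~\ref{lem:openvsclosed} it suffices to construct a closed realization in $\R^3$ and then open it up. A useful preliminary reduction: any neuron not participating in any codeword can be added back at the end as a disjoint small ball, so I assume every $i$ either has $\{i\} \in \supp(\C)$ or is incident to an edge of $G_\C$. Moreover, if $\{i\} \notin \supp(\C)$ then $i$ has degree exactly $1$ in $G_\C$, since two distinct edges $\{i,j\}$ and $\{i,k\}$ at $i$ would, by the intersection-closed hypothesis, force $\{i\} \in \supp(\C)$.

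My plan is to apply Tancer's theorem to the $1$-dimensional simplicial complex $\Delta(\C)$, obtaining closed convex sets $W_1,\ldots,W_n \subset \R^3$ whose nerve is exactly $G_\C$: pairs $\{i,j\}$ intersect iff adjacent, and no three intersect. This handles codewords of weight $2$ and rules out weight $\ge 3$. To force the singletons to match $\supp(\C)$, I use the following observation: since each $W_i$ is convex (hence connected) and the family $\{W_i \cap W_j\}_{j \neq i}$ consists of pairwise disjoint relatively open subsets of $W_i$ (by the no-triple-intersection property), if $W_i \subseteq \bigcup_{j \neq i} W_j$ then connectedness forces $W_i \subseteq W_j$ for a single neighbor $j$. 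Equivalently, $\{i\}$ appears in the realized code iff $W_i \not\subseteq W_j$ for every $j$.

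For $i$ of degree $\ge 2$, the inclusion $W_i \subseteq W_j$ is impossible, because any other neighbor $k$ would yield $W_i \cap W_j \cap W_k = W_i \cap W_k \neq \emptyset$, violating no-triple-intersection. For isolated $i$, $W_i$ is disjoint from all other $W_k$'s, so $\{i\}$ is automatic. The only cases requiring intervention are degree-$1$ vertices $i$ with unique neighbor $j$. If $\{i\} \notin \supp(\C)$, I replace $W_i$ with a small closed ball chosen inside $W_i \cap W_j$; such a ball can be chosen disjoint from every other $W_k$ since $W_i \cap W_j \cap W_k = \emptyset$ for all $k \neq i,j$. If $\{i\} \in \supp(\C)$ and Tancer's sets happen to satisfy $W_i \subseteq W_j$, I replace $W_i$ by the convex hull $\conv(W_i \cup \{q\})$ for a point $q$ placed just outside $W_j$ in a region of $\partial W_j$ away from every other $W_k$, so that the enlarged hull still meets $W_j$ but no other $W_k$. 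Lemma~\ref{lem:openvsclosed} then converts the resulting closed realization to an open one in $\R^3$, establishing $d(\C) \le 3$. The most delicate step is this last extension, where the obstacle is to guarantee the convex-hull enlargement does not accidentally cross into some unintended $W_k$; the safeguard is that $\partial W_j$ has ample regions disjoint from the finitely many other $W_k$'s meeting $W_j$, from which $q$ may be chosen arbitrarily close to $W_i$ so that the enlargement remains convex and controlled.
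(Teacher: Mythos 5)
Your overall route is the paper's: reduce to realizing the $1$-skeleton $G_\C$ via Tancer's theorem in $\R^3$ and pass between closed and open realizations with Lemma~\ref{lem:openvsclosed}; what you do inline (adjusting Tancer's sets so that the singleton codewords match $\supp(\C)$) is precisely the content of Lemma~\ref{lem:graphvscode}, which the paper proves separately and then cites. The one step that does not go through as written is your last one: a degree-$1$ vertex $i$ with unique neighbor $j$, $\{i\}\in\supp(\C)$, and $W_i\subseteq W_j$. You propose $\conv(W_i\cup\{q\})$ with $q$ ``arbitrarily close to $W_i$'' just outside $W_j$ --- but if $W_i$ sits deep in the interior of $W_j$, no point outside $W_j$ is close to $W_i$, and the cone $\conv(W_i\cup\{q\})$ then sweeps across a large portion of $W_j$ on its way from $q$ down to $W_i$. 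Nothing you have said prevents it from passing through $W_j\cap W_k$ for some other neighbor $k$ of $j$, which would create the forbidden edge $\{i,k\}$ and a triple intersection: choosing $q$ over a boundary patch of $W_j$ free of the other $W_k$'s controls only where the cone \emph{exits} $W_j$, not what it crosses \emph{inside} $W_j$.

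The repair is easy and uses the same degree-$1$ observation you already exploit in the $\{i\}\notin\supp(\C)$ case: since $j$ is the only neighbor of $i$, you may discard $W_i$ entirely rather than enlarge it. The sets $\partial W_j\cap W_k$ for $k\neq i,j$ are pairwise disjoint closed subsets of the connected set $\partial W_j$ (a common point would be a triple intersection in $W_j$), so they cannot cover it; pick $p\in\partial W_j$ lying in no other $W_k$ and replace $W_i$ by a small closed ball centered at $p$. This ball meets $W_j$ at $p$, is not contained in $W_j$, and misses every other set --- exactly the argument of Lemma~\ref{lem:boundarypt} and of the forward direction of Lemma~\ref{lem:graphvscode}. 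Two smaller slips: a neuron appearing in no codeword must be realized by the empty set, not by ``a disjoint small ball'' (the ball would introduce the codeword with support $\{i\}$); and the sets $W_i\cap W_j$ are closed, not relatively open, in $W_i$, though your connectedness argument survives because finitely many pairwise disjoint closed sets also cannot cover a connected set unless all but one are empty.
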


In order to prove Theorem~\ref{thm:R3}, we will repeatedly make geometric augmentations to existing realizations.  In order to make such augmentations without changing the underlying code, we must ensure that subset containment relations between sets are maintained.  In the 2-sparse case, the following definition encapsulates the key relationships that must be maintained:

\begin{definition}\label{def:2sparserelations}
Let $\U = \{U_1,\ldots, U_n\}$ be a collection of sets in $\R^d$. For any ordered pair $(U_i,U_j)$ we distinguish three possible relations between $U_i$ and $U_j$:
\begin{itemize}
\item\textbf{(Type A} \underline{Disjointness:} $U_i\cap U_j = \emptyset$; i.e., $\{i,j\} \not\in \supp(\C)$.
\item\textbf{(Type B)} \underline{Containment:} $U_j\subset U_i$; i.e., there exists a codeword $c\in \C(\U)$ so that $\{i,j\}\subseteq \supp(c)$ and any codeword whose support contains $i$ must also have $j$ in its support.
\item\textbf{(Type C)} \underline{Proper Intersection:} $U_i\cap U_j$ is nonempty and $U_j\setminus U_i$ is nonempty; i.e., there exist codewords $c_1,c_2 \in \C(\U)$ so that $\{i,j\}\subset \supp(c_1)$, $j\in \supp(c_2)$ and $i\notin \supp(c_2)$. 
\end{itemize}
\end{definition}

The Type A, Type B and Type C set relationships effectively characterize the structure of a 2-sparse code; indeed 2-sparse codes are completely determined by the pairwise relationships of the sets in a given realization of them. We explicitly state this in the following proposition:

\begin{proposition}\label{prop:possiblerelations}
Let $\U$ and $\U'$ be collections of sets in $\R^d$ so that $\C(\U)$ and $\C(\U')$ are both 2-sparse. Then $\C(\U) = \C(\U')$ if and only if for every ordered pair $(i,j)$ the relation between $U_i$ and $U_j$ is the same as the relation between $U_i'$ and $U_j'$. 
\end{proposition}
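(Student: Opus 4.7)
My strategy is to show that, under $2$-sparsity, the support $\supp(\C(\U))$ is both completely encoded by, and completely recoverable from, the pairwise types of Definition~\ref{def:2sparserelations}. Since a $2$-sparse code is determined by its support (subsets of $[n]$ of size at most $2$), this simultaneously gives both directions.

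For the forward direction, each type is expressible purely in terms of $\C(\U)$: $(U_i, U_j)$ is Type A iff $\{i, j\} \notin \supp(\C(\U))$; Type B iff $\{i, j\} \in \supp(\C(\U))$ and no codeword of $\C(\U)$ has $j$ in its support without also having $i$; Type C iff $\{i, j\} \in \supp(\C(\U))$ and such a codeword exists. Hence $\C(\U) = \C(\U')$ immediately forces matching types on every ordered pair.

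For the reverse direction, note that $2$-sparsity forces every triple intersection $U_i \cap U_j \cap U_k$ (distinct indices) to be empty, since a point there would yield a codeword of support size $\geq 3$. This has two consequences. First, the $2$-element supports are determined by pairwise types: $\{i, j\} \in \supp(\C(\U))$ iff $U_i \cap U_j \neq \emptyset$ iff the relation is Type B or C, by definition. Second, for singletons, I would prove the key claim that $\{i\} \in \supp(\C(\U))$ iff no $j$ makes $(U_j, U_i)$ Type B (equivalently, no $U_j$ contains $U_i$). The easy direction: if $U_i \subseteq U_j$ then $U_i \setminus \bigcup_{k \neq i} U_k = \emptyset$. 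For the converse, the sets $\{U_i \cap U_k\}_{k \neq i}$ are pairwise disjoint (by the empty triple intersections) open subsets of the connected set $U_i$ (connected by convexity); but a connected space cannot be written as a disjoint union of more than one nonempty open set, so this family cannot cover $U_i$ unless a single member equals $U_i$, i.e., some $U_k$ contains $U_i$. Thus if no $U_k$ contains $U_i$, some point of $U_i$ lies in none of the other sets, so $\{i\} \in \supp(\C(\U))$. Since the convention is that $\emptyset \in \supp(\C)$ always, matching pairwise types give $\supp(\C(\U)) = \supp(\C(\U'))$, hence $\C(\U) = \C(\U')$.

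The main obstacle is the singleton case: the empty and $2$-element cases are formal consequences of $2$-sparsity, whereas detecting a singleton region from pairwise data requires the above connectedness argument to rule out the scenario in which the other $U_k$'s jointly cover $U_i$ without any one of them containing it. This is precisely where the implicit convexity of the $U_i$ and openness of the $U_k$ in the paper's setting are used.
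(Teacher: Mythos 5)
The paper states Proposition~\ref{prop:possiblerelations} without proof, so there is no argument of record to compare yours against; your write-up supplies a complete and correct one. The forward direction and the recovery of the two-element supports are, as you say, formal consequences of $2$-sparsity (empty triple intersections give $U_i\cap U_j\neq\emptyset$ iff $\{i,j\}\in\supp(\C(\U))$, and $U_j\subseteq U_i$ iff every codeword containing $j$ also contains $i$ --- your direction of this equivalence is the correct one; the ``i.e.''\ gloss of Type~B in Definition~\ref{def:2sparserelations} has $i$ and $j$ transposed). The genuine content is the singleton case, and your connectedness argument is exactly the right tool; moreover it is not a dispensable technicality. As literally stated for arbitrary ``collections of sets'' the proposition is false: take $U_1=(0,1)\cup(2,3)$, $U_2=(0,1)$, $U_3=(2,3)$ versus $U_1'=(0,3)$, $U_2'=U_2$, $U_3'=U_3$; all pairwise relations agree and both codes are $2$-sparse, yet $\{1\}\in\supp(\C(\U'))\setminus\supp(\C(\U))$. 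So your appeal to convexity of the $U_i$ and openness of the $U_k$ is what rescues the statement, and you are right to flag these hypotheses as implicit rather than stated. Two minor caveats, neither of which affects the substance: the proposition is later invoked (in the proof of Lemma~\ref{lem:openvsclosed}) for closed convex sets, where the sets $U_i\cap U_k$ are relatively closed rather than open in $U_i$ --- the same conclusion holds because a connected space also cannot be partitioned into finitely many (at least two) nonempty relatively closed sets --- and your singleton criterion tacitly assumes $U_i\neq\emptyset$, since an empty $U_i$ is simultaneously contained in and disjoint from every other set, so the three types are not even well-defined for such degenerate pairs.
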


We now introduce the geometric underpinnings of the augmentations we will apply to realizations of codes.

\begin{definition}\label{def:shrink}
Given $\epsilon>0$ and $A \subset \R^d$, the \textbf{trim} of $A$ by $\epsilon$ is the set \[\trim(A,\epsilon) \od\interior \{ p\in \R^d \mid B_\epsilon(p)\subseteq A\}.\]

The \textbf{inflation} of $A$ by $\epsilon$ is the set \[\inflate(A,\epsilon) \od \{a + x \ | \ a\in A, \ x\in \R^d \text{ with } ||x||<\epsilon\}\]  If $\mathcal A = \{A_1,\ldots, A_n\}$ is a collection of sets, then $\trim(\mathcal A,\epsilon) = \{\trim(A_1,\epsilon),\ldots,\trim(A_n,\epsilon)\}$, and $\inflate(\mathcal A, \epsilon) = \{\inflate(A_1,\epsilon),\ldots,\inflate(A_n,\epsilon)\}$. 
\end{definition}

\begin{proposition}\label{prop:shrink}
For any convex set $A\subset \R^d$ and $\epsilon>0$,  the following statements hold:
\begin{enumerate}
\item $\trim(A,\epsilon)$ is an open convex set
\item  $\closure{\trim(A,\epsilon)}$ is contained in the interior of $A$
\item  $\inflate(A,\epsilon)$ is an open convex set
\end{enumerate}
\end{proposition}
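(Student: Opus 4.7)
The plan is to treat the three parts using a single auxiliary object: the set $S = \{p \in \R^d \mid B_\epsilon(p) \subseteq A\}$, for which $\trim(A,\epsilon) = \interior(S)$ by definition. Parts (1) and (3) will be essentially direct computations with Minkowski sums and the triangle inequality; part (2) will require the most care.

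For (1), I would first check that $S$ is convex. Given $p,q \in S$ and $t \in [0,1]$, the identity $B_\epsilon(tp+(1-t)q) = tB_\epsilon(p) + (1-t)B_\epsilon(q)$ (valid because $tB_\epsilon(0) + (1-t)B_\epsilon(0) = B_\epsilon(0)$) combined with convexity of $A$ yields $B_\epsilon(tp+(1-t)q) \subseteq tA + (1-t)A \subseteq A$. Thus $\trim(A,\epsilon) = \interior(S)$ is convex (the interior of a convex set in $\R^d$ is convex) and open by definition. For (3), I would rewrite $\inflate(A,\epsilon) = \bigcup_{a \in A} B_\epsilon(a)$, which is open as a union of open balls. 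For convexity, decompose $ty_1 + (1-t)y_2 = (ta_1 + (1-t)a_2) + (tx_1 + (1-t)x_2)$ where $y_i = a_i + x_i$ with $a_i \in A$ and $\|x_i\| < \epsilon$; the first summand lies in $A$ by convexity and the second has norm less than $\epsilon$ by the triangle inequality.

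For (2), the key step is showing that $S$ itself is closed. If $p_n \to p$ with each $p_n \in S$, then for any $x \in B_\epsilon(p)$ there exists $\delta > 0$ with $\|x - p\| < \epsilon - \delta$; for $n$ sufficiently large $\|p_n - p\| < \delta$, so $\|x - p_n\| < \epsilon$, giving $x \in B_\epsilon(p_n) \subseteq A$. Hence $B_\epsilon(p) \subseteq A$, that is, $p \in S$. Therefore $\closure{\trim(A,\epsilon)} \subseteq \closure{S} = S$, and any $p \in S$ lies in $\interior(A)$ because $B_\epsilon(p)$ is an open neighborhood of $p$ contained in $A$.

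The main obstacle is conceptual rather than computational, and it lies in part (2): one must carefully distinguish $\trim(A,\epsilon) = \interior(S)$ from $S$ itself, and exploit the openness of Euclidean balls to promote the set-theoretic inclusion $B_\epsilon(p) \subseteq A$ to topological membership $p \in \interior(A)$. Once the closedness of $S$ is in hand, the rest of the proposition reduces to standard Minkowski-sum manipulations.
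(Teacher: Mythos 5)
Your proof is correct, and for parts (1) and (3) it is essentially the paper's argument: convexity of $\trim(A,\epsilon)$ via the behaviour of $\epsilon$-balls under convex combinations (the paper phrases this as ``the convex hull of $B_\epsilon(p)\cup B_\epsilon(q)$ lies in $A$,'' you phrase it as the Minkowski identity $tB_\epsilon(p)+(1-t)B_\epsilon(q)=B_\epsilon(tp+(1-t)q)$), and openness plus convexity of $\inflate(A,\epsilon)$ as a Minkowski sum of $A$ with an open ball. If anything, your version of (1) is slightly more careful, since you explicitly note that what must be shown convex is $\interior(S)$ rather than $S$, and you invoke the standard fact that the interior of a convex set is convex; the paper's one-line argument leaves that last step implicit.

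For part (2) your route genuinely differs from the paper's. The paper disposes of (2) with the single chain $\closure{\trim(A,\epsilon)}\subseteq \trim(A,\epsilon/2)\subseteq \interior(A)$, which is slick but leaves the first inclusion to the reader (one must check that a limit of points whose $\epsilon$-balls lie in $A$ is an interior point of the set of points whose $\epsilon/2$-balls lie in $A$). You instead prove directly that $S=\{p\mid B_\epsilon(p)\subseteq A\}$ is closed, so that $\closure{\trim(A,\epsilon)}\subseteq\closure S=S\subseteq\interior(A)$. Both arguments are valid; yours trades the paper's brevity for a self-contained limit argument that makes the key topological point (promoting $B_\epsilon(p)\subseteq A$ to $p\in\interior(A)$) explicit, while the paper's nesting trick generalizes pleasantly to give the stronger statement $\closure{\trim(A,\epsilon)}\subseteq\trim(A,\epsilon')$ for any $0<\epsilon'<\epsilon$. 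No gaps.
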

\begin{proof}
For (1), we need only prove convexity, and we may assume $\trim(A,\epsilon)$ is nonempty.  Let $p$ and $q$ be points in $\trim(A,\epsilon)$; then $B_\epsilon(p)$ and $B_\epsilon(q)$ are contained in $A$, and hence so is the convex hull of their union. This convex hull contains the line segment $\overline{pq}$. For (2), note that $\closure{\trim(A,\epsilon)}\subseteq \trim(A, \epsilon/2) \subseteq \operatorname{int}(A)$.  Finally, (3) follows from the fact that $A$ is convex and $\{x \in \R^d \ |  \ ||x|| < \epsilon\}$ is open and convex.
\end{proof}

\begin{lemma}\label{lem:shrinking}
  Given a 2-sparse code $\C$ with an open convex realization $\U = \{U_1,\ldots,U_n\}$, there exists some $\epsilon > 0$ so that $\trim(\U,\epsilon)$ is also a realization of $\C$. 
\end{lemma}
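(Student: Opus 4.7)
The plan is to invoke Proposition~\ref{prop:possiblerelations}: since $\C$ is 2-sparse, it suffices to show that for $\epsilon$ sufficiently small, each ordered pair $(U_i,U_j)$ has the same relation type (A, B, or C) as the pair $(\trim(U_i,\epsilon),\trim(U_j,\epsilon))$. Type A is immediate: Proposition~\ref{prop:shrink}(2) gives $\trim(U_i,\epsilon)\subseteq U_i$, so disjointness is inherited. Type B ($U_j\subset U_i$) is likewise automatic for every $\epsilon>0$, because $B_\epsilon(p)\subseteq U_j$ implies $B_\epsilon(p)\subseteq U_i$, so $\{p:B_\epsilon(p)\subseteq U_j\}\subseteq\{p:B_\epsilon(p)\subseteq U_i\}$, and taking interiors yields $\trim(U_j,\epsilon)\subseteq\trim(U_i,\epsilon)$ directly.

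For Type C relations, and to make sure no $\trim(U_j,\epsilon)$ collapses to the empty set, I would select a representative point $p_c$ from the atom of each codeword $c\in\C$. Because each $U_i$ is open, the distance $\delta_{c,i}\od d(p_c,\R^d\setminus U_i)$ is strictly positive whenever $i\in\supp(c)$. Choosing $\epsilon$ to be any positive value smaller than the minimum of these finitely many $\delta_{c,i}$ guarantees $p_c\in\trim(U_i,\epsilon)$ for every $i\in\supp(c)$, while $p_c\notin U_j\supseteq\trim(U_j,\epsilon)$ for $j\notin\supp(c)$. Thus each $p_c$ survives in the corresponding atom of $\trim(\U,\epsilon)$, which simultaneously proves $\C\subseteq\C(\trim(\U,\epsilon))$ and preserves every Type C relation witnessed by these $p_c$.

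The main obstacle is the reverse containment $\C(\trim(\U,\epsilon))\subseteq\C$, where 2-sparsity must be invoked explicitly. If $c'\in\C(\trim(\U,\epsilon))$ had $|\supp(c')|\geq 3$, then a witness would lie in a triple intersection of the $U_i$, impossible in a 2-sparse realization. If $\supp(c')=\{i,j\}$, then a witness $q\in U_i\cap U_j$ must have codeword of support exactly $\{i,j\}$ in $\C(\U)$ by 2-sparsity, so $c'\in\C$. The most delicate case is $\supp(c')=\{i\}$ with $\{i\}\notin\supp(\C)$: here 2-sparsity together with the connectedness of the convex open set $U_i$ forces $U_i\subset U_m$ for some unique $m$ (otherwise two sets other than $U_i$ would each meet $U_i$ without jointly intersecting it there, disconnecting $U_i$), and the Type B preservation above then yields $\trim(U_i,\epsilon)\subseteq\trim(U_m,\epsilon)$, contradicting $c'_m=0$. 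This singleton case is where I expect to spend the most care, since it is the one step where only 2-sparsity, not the geometry of trimming itself, rules out the offending codeword.
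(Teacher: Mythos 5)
Your proposal is correct and follows essentially the same route as the paper: choose $\epsilon$ below finitely many positive distances so that witnesses of each codeword survive trimming, observe that trimming cannot create triple intersections, and dispose of a spurious singleton codeword via 2-sparsity plus the connectedness of $U_i$ (the argument of Lemma~\ref{lem:intersectioncomplete}). Your bookkeeping via one representative point per codeword atom is a mild streamlining of the paper's case analysis by relation type, but the substance is the same.
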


\begin{proof}
Our plan is as follows: for each set $U_i$, we find an $\epsilon_i$ such that $\trim(U_i, \epsilon_i)\neq \emptyset$, and for each pair $\{i,j\}$ we find an $\epsilon_{ij}$ such that $\trim(\{U_i, U_j\}, \epsilon_{ij})$ preserves their relationship type (Type A, Type B or Type C).  We then let $\epsilon$ be the minimum of all $\epsilon_i$ and $\epsilon_{ij}$, and show that $\trim(\U,\epsilon)$ is a realization of the original code $\C$.

To start, for each $i$ with $U_i$ nonempty, there must be some point $p$ and $\delta_i>0$ with $B_{\delta_i}(p)\subset U_i$. Let $\epsilon_i = \delta_i/2$.  Let $\varepsilon_1 = \min_{i\in [n]} \epsilon_i$.  Now, for each pair $\{i,j\}$, we choose $\epsilon_{ij}$ depending on the relationship type between $U_i$ and $U_j$.
\begin{enumerate}
\item Type A: If $U_i\cap U_j = \emptyset$, set $\epsilon_{ij} = \min\{\epsilon_i, \epsilon_j\}$
\item Type B: If $U_i = U_j$, set $\epsilon_{ij} = \min\{\epsilon_i, \epsilon_j\}$. If $U_i\subsetneq U_j$, note that $U_j\backslash U_i$ has nonempty interior.  Thus there exists some point $p$ and some $\delta_{ij}>0$ with $B_{\delta_{ij}}(p)\subset U_j\backslash U_i$. Let $\epsilon_{ij} = \min\{\delta_{ij}/2, \epsilon_i\}$.
\item Type C: If $U_i\cap U_j\neq \emptyset$, but neither $U_i\subset U_j$ nor $U_j\subset U_i$ is true, note that $U_i\cap U_j$ is open and therefore there exists a point $p$ and $\epsilon'>0$ with $B_{\epsilon'}(p)\subset U_i\cap U_j$. There exists also points $p_i$, $p_j$ in $U_i\backslash U_j$, $U_j\backslash U_i$ respectively, with corresponding $\hat\epsilon$ and $\tilde \epsilon$ such that $B_{\hat\epsilon}(p_i) \subset U_i\backslash U_j$ and $B_{\tilde\epsilon}(p_j)\subset U_j\backslash U_i$.  Pick $\epsilon_{ij} = \min\{\epsilon_i, \epsilon_j, \hat\epsilon/2, \tilde\epsilon/2, \epsilon'/2\}$.
\end{enumerate}

Let $\varepsilon_2 = \min_{i,j} \epsilon_{ij}$, and finally, let $\epsilon = \min\{\varepsilon_1, \varepsilon_2\}$.  Since $\trim(\epsilon, U) \subset U$, and originally there were no triple intersections, by construction it is impossible for $\trim(\epsilon, \U)$ to have triple intersections.  Thus, $\C(\trim(\epsilon, \U))$ is still 2-sparse.  We now show that $\C(\trim(\epsilon, \U)) = \C$.

If the codeword with support $\{i,j\}$ is in $\C(\trim(\epsilon,\U))$, then $\trim(\epsilon,U_i)\cap \trim(\epsilon, U_j)\neq \emptyset$. As $\trim(\epsilon, U)\subset U$, this implies that $U_i\cap U_j\neq \emptyset$. Since $\C$ is 2-sparse, the codeword with support $\{i,j\}$ is in $\C$.  On the other hand, if the codeword with support $\{i,j\}$ is in $\C$, then $U_i\cap U_j\neq \emptyset$, and so we are in case $(1), (2)$, or $(3)$ above. By our choice of $\epsilon$, we ensure that in each case $\trim(U_i,\epsilon)\cap \trim(U_j,\epsilon) \neq \emptyset$, and hence (as the code is 2-sparse) the codeword with support $\{i,j\}$ is in $\C(\trim(\U,\epsilon))$.

If a codeword with support $\{i\}$ is in $\C(\trim(\U, \epsilon))$, then $\trim(U_i, \epsilon) \backslash \bigcup_{j\in [n], j\neq i} \trim(U_j,\epsilon) \neq \emptyset.$  We then know that $U_i\backslash \bigcup_{j\in [n], j\neq i}U_j \neq \emptyset$.  If it were not, then we would have $U_i\subset \bigcup_{j\in I}U_j$ for some index set $I$.  However,  this is impossible: if $|I| = 1$, then $U_i\subset U_j$, but then $\trim(U_i, \epsilon) \subset \trim(U_j, \epsilon)$.  If $|I|>1$, then  $U_i\subset \bigcup_{j\in I} U_j$.  But then the 2-sparsity of $\C$ means we would see the codewords $\{i,j\}$ and $\{i,k\}$ in $\C$ for $j,k\in I$ but not their intersection $\{i\}$, contradicting Lemma \ref{lem:intersectioncomplete}. Hence, the codeword with support $\{i\}$ is in $\C$.

Now, suppose a codeword with support $\{i\}$ is in $\C$, and let $J = \{j\ | \ U_i\cap U_j \neq \emptyset\}$.  If $|J|\leq 1$ then we are in case $(1), (2)$, or $(3)$ above, and by our choice of $\epsilon$ we know there is a codeword with support $\{i\}$ in $\C(\trim(\U, \epsilon))$.  If $|J|\geq 2$, let $j,k\in J$. Then by our choice of $\epsilon$, we know $\trim(U_i,\epsilon)\cap \trim(U_j, \epsilon)\neq \emptyset$ and $\trim(U_i,\epsilon)\cap \trim(U_k,\epsilon)\neq \emptyset$, and hence the codewords with supports $\{i,j\}$ and $\{i,k\}$ are in $\trim(\U, \epsilon)$. By Lemma \ref{lem:intersectioncomplete}, we know the codeword with support $\{i\}$ is also in $\C(\trim(\U, \epsilon))$.
\end{proof}

\begin{lemma}\label{lem:inflating}
Let $\C$ be a 2-sparse code with a closed convex realization $\V = \{V_1,\ldots, V_n\}$ in which every set is bounded. Then there exists some $\epsilon>0$ such that $\inflate(\V,\epsilon)$ is an open convex realization of $\C$. 
\end{lemma}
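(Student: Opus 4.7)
The plan is to choose $\epsilon>0$ small enough that $\inflate(\V,\epsilon)$ still has all the Type A / B / C pairwise relations of $\V$ (per Definition~\ref{def:2sparserelations}) and creates no new 3-wise intersection, so that $\C(\inflate(\V,\epsilon))$ is 2-sparse with the same pairwise profile as $\C$. Proposition~\ref{prop:possiblerelations} then gives $\C(\inflate(\V,\epsilon))=\C$, and Proposition~\ref{prop:shrink}(3) already guarantees each inflated set is open and convex.

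For each ordered pair $(i,j)$ I would define a positive threshold $\epsilon_{ij}$ depending on the relation between $V_i$ and $V_j$. In the Type A case, boundedness makes each $V_\ell$ compact, so $d(V_i,V_j)>0$; choosing $\epsilon_{ij}=d(V_i,V_j)/2$ and invoking the triangle inequality ($d(y,V_i)+d(y,V_j)\ge d(V_i,V_j)$) keeps the inflations disjoint. In the Type B case $V_j\subseteq V_i$ automatically implies $\inflate(V_j,\epsilon)\subseteq\inflate(V_i,\epsilon)$ for every $\epsilon$, so no constraint is needed. In the Type C case I would fix a witness $p\in V_j\setminus V_i$; since $V_i$ is closed, $d(p,V_i)>0$, and taking $\epsilon_{ij}<d(p,V_i)$ leaves $p$ inside $\inflate(V_j,\epsilon_{ij})$ but outside $\inflate(V_i,\epsilon_{ij})$. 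The original intersection $V_i\cap V_j$ lies in both inflations, so proper intersection is preserved.

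To avoid new triple overlaps I would use the fact that 2-sparsity of $\C$ forces $V_i\cap V_j\cap V_k=\emptyset$ for every triple, since any point in such an intersection would produce a codeword with support of size $\ge 3$. The continuous function $f(p)=\max(d(p,V_i),d(p,V_j),d(p,V_k))$ is therefore strictly positive on all of $\R^d$. Boundedness of the $V_\ell$ forces $f(p)\to\infty$ as $\|p\|\to\infty$, so $f$ attains a positive infimum $\epsilon_{ijk}$ on $\R^d$. Taking $\epsilon$ smaller than every $\epsilon_{ij}$ and every $\epsilon_{ijk}$ ensures that no three inflated sets share a point, so $\C(\inflate(\V,\epsilon))$ remains 2-sparse.

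With both conditions in hand, the Type A/B/C profile of $\inflate(\V,\epsilon)$ matches that of $\V$, and Proposition~\ref{prop:possiblerelations} yields $\C(\inflate(\V,\epsilon))=\C$. The main subtlety I anticipate is precisely the triple-intersection step: the pairwise estimates are routine once compactness is available, but ruling out spurious three-way overlaps uses boundedness in an essential way — it is what upgrades the pointwise inequality $f>0$ to a uniform positive lower bound on all of $\R^d$ — and this is what makes the boundedness hypothesis in the lemma necessary.
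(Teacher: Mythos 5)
Your proof is correct, and it reaches the same conclusion by a cleaner route than the paper's. The paper inflates the sets one at a time, processing them in an order compatible with reverse inclusion (maximal sets first) and assigning each $V_i$ its own $\epsilon_i$, with the extra constraint $\epsilon_i<\epsilon_j$ whenever $V_i\subset V_j$; all of that bookkeeping exists precisely to keep containments intact while the sets are inflated asynchronously, and only at the very end does the paper pass to a uniform $\epsilon$. Your observation that uniform inflation automatically preserves containment ($A\subseteq B$ implies $\inflate(A,\epsilon)\subseteq\inflate(B,\epsilon)$ for every $\epsilon$) makes the iterative ordering unnecessary and lets you pick a single global $\epsilon$ in one step. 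The quantitative ingredients are essentially the same in both arguments: positive distance between disjoint compact sets for Type A, a witness point of $V_j\setminus V_i$ at positive distance from the closed set $V_i$ for Type C, and a uniform positive separation preventing new triple overlaps (the paper uses the distance from $V_i$ to the compact set $V_j\cap V_k$, you use the positive minimum of $f(p)=\max\{d(p,V_i),d(p,V_j),d(p,V_k)\}$; these are interchangeable, and your remark that boundedness is what upgrades $f>0$ to a uniform bound is exactly the role boundedness plays in the paper's version). Both proofs then conclude via Proposition~\ref{prop:possiblerelations}, after noting that the inflated sets are open and convex by Proposition~\ref{prop:shrink}. The only trivial caveat is that your $f$ and the Type A/C distances should be formed only for nonempty sets, but empty sets contribute neither pairwise nor triple intersections, so nothing is lost.
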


\begin{proof}
Consider the partial ordering on $\V$ given by set inclusion. We will use this ordering to inflate the sets in $\V$ iteratively (possibly by different $\epsilon$ factors) and then argue that we can obtain a uniform $\epsilon$ for which $\inflate(\V,\epsilon)$ is an open convex realization of $\C$.  In this iterative process, if $V_i=V_j$ for any $i \neq j$, we apply the process simultaneously to $V_i$ and $V_j$.  As such, it is sufficient for our proof to assume $V_i \neq V_j$ for any $i \neq j$.

To start, begin with a fixed index $i$ for which $V_i$ is maximal in $\V$ with respect to inclusion.  All sets in $\V$ are closed and bounded so for any $j$ with $V_j \cap V_j = \emptyset$, $V_i$ has positive distance $d_{i,j}$ to $V_j$.  Let $\displaystyle \delta_i=\min_{V_i \cap V_j = \emptyset} d_{i,j}$.  Now if there are $j,k\neq i$ with $V_j\cap V_k\neq\emptyset$, then $V_i$ has positive distance $d_{i,j,k}$ to $V_j\cap V_k$; take $\delta'_i$ to be the minimum of all such $d_{i,j,k}$. Furthermore, let $\delta''_i>0$ be such that for all $j$ with $V_j\not\subset V_i$, we have $V_j\not\subset\inflate(V_i,\delta''_i)$.  Finally, choose $\epsilon_i < \min\left\{\frac{\delta_i}{2}, \frac{\delta'_i}{2},\frac{\delta''_i}{2}\right\}.$

If we replace $V_i$ by $\closure{\inflate(V_i,\epsilon_i)}$, then the three subset relationship types for the ordered pairs $(V_i,V_j)$ where $j \neq i$ are maintained:

\begin{itemize}
\item Type 1: Disjointness is preserved since $\epsilon_i$ is at most half the distance from $V_i$ to any set disjoint from it.
\item Type 2: Containment is preserved since we are only making $V_i$ bigger.
\item Type 3: Proper intersection is preserved by our choice of $\epsilon_i$.
\end{itemize} 
By a similar argument, the subset relationship of the order pair $(V_j,V_i)$ for any $j \neq i$ is also preserved after replacing $V_i$ by $\closure{\inflate(V_i,\epsilon_i)}$.  Thus replacing $V_i$ by $\closure{\inflate(V_i,\epsilon_i)}$ yields a new realization of $\C$. 

For any subsequent step in our iterative process, choose a set $V_i \in \V$ for which every member of the set $\{V_j \in \V \ | \ V_j \supset V_i\}$ has already been inflated.  Choose $\epsilon_i$ in the same way as previously described with the additional caveat that if $V_i \subset V_j$ $\epsilon_i < \epsilon_j$.  A similar argument shows that replacing $V_i$ by $\closure{\inflate(V_i,\epsilon_i)}$ yields a new realization of $\C$. Once we have inflated every set in the realization we can let $\epsilon = \min_{i\in [n]} \epsilon_i$ and observe that  $\inflate(\U, \frac{\epsilon}{2})$ is an open convex realization of $\C$. 
\end{proof}

This  result allows us to prove the following useful fact. 

\begin{lemma}\label{lem:openvsclosed}
Let $\C$ be a 2-sparse code. Then $\C$ has an open convex realization in $\R^d$ if and only if $\C$ has a closed convex realization in $\R^d$.
\end{lemma}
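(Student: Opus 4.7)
The plan is to establish the two implications separately, each by modifying the given realization using one of Lemmas~\ref{lem:shrinking} and~\ref{lem:inflating}, and then invoking Proposition~\ref{prop:possiblerelations} to conclude that the pairwise relation pattern, and hence the code, is unchanged.

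\textbf{Open implies closed.} Given an open convex realization $\U = \{U_1,\ldots,U_n\}$ of $\C$, I would apply Lemma~\ref{lem:shrinking} to obtain $\epsilon > 0$ such that $\trim(\U,\epsilon)$ is also an open convex realization of $\C$, and then define $V_i = \closure{\trim(U_i,\epsilon)}$. By Proposition~\ref{prop:shrink}(1) each $\trim(U_i,\epsilon)$ is open and convex, so each $V_i$ is closed and convex. Proposition~\ref{prop:shrink}(2) gives $V_i \subset \interior(U_i) \subset U_i$, so any triple intersection of the $V_i$'s sits inside a triple intersection of the $U_i$'s, which is empty by 2-sparsity; hence $\C(\V)$ is 2-sparse, and by Proposition~\ref{prop:possiblerelations} it suffices to check that each pairwise relation is preserved. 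Type A (disjointness) is immediate from $V_i \cap V_j \subset U_i \cap U_j = \emptyset$. Type B (say $U_j \subset U_i$) follows from $B_\epsilon(p)\subset U_j \Rightarrow B_\epsilon(p)\subset U_i$, which gives $\trim(U_j,\epsilon)\subset \trim(U_i,\epsilon)$ and hence $V_j \subset V_i$; strict containment, when present, is witnessed by the small ball inside $U_j\setminus U_i$ produced in the proof of Lemma~\ref{lem:shrinking}, which lies in $\trim(U_j,\epsilon)$ but is disjoint from $U_i \supset V_i$. Type C is analogous: the intersection $\trim(U_i,\epsilon)\cap\trim(U_j,\epsilon)$ certifies $V_i\cap V_j\neq\emptyset$, and the balls constructed inside $U_i\setminus U_j$ and $U_j\setminus U_i$ lie in the respective trims but outside the opposing original set, ruling out either containment.

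\textbf{Closed implies open.} Lemma~\ref{lem:inflating} requires bounded sets, so I would first produce a bounded closed realization. For each of the finitely many codewords $c \in \C$ pick a witness point $w_c \in \bigcap_{i \in \supp(c)} V_i \setminus \bigcup_{j \notin \supp(c)} V_j$, and let $B$ be a closed ball large enough to contain every $w_c$. Set $V_i' = V_i \cap B$; each $V_i'$ is closed, convex, and bounded. Since
\[
\bigcap_{i \in \supp(c)} V_i' \setminus \bigcup_{j \notin \supp(c)} V_j' \;=\; \Bigl(\bigcap_{i \in \supp(c)} V_i \setminus \bigcup_{j \notin \supp(c)} V_j\Bigr) \cap B,
\]
the point $w_c$ certifies $c \in \C(\V')$, while any point witnessing a codeword in $\V'$ also witnesses it in $\V$; hence $\C(\V') = \C$. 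Applying Lemma~\ref{lem:inflating} to $\V'$ then produces an open convex realization of $\C$ in the same dimension $\R^d$.

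The main obstacle I anticipate is the verification in the first direction that taking closures of the trimmed sets does not collapse a Type C (proper intersection) pair into a Type B (containment) pair, since closures can in principle acquire boundary points that tip containment relations the wrong way. Concretely, one must exhibit points in $\closure{\trim(U_j,\epsilon)} \setminus \closure{\trim(U_i,\epsilon)}$ and vice versa, and the key trick is to combine the inclusion $\closure{\trim(U_i,\epsilon)}\subset U_i$ from Proposition~\ref{prop:shrink}(2) with the explicit small balls in $U_j\setminus U_i$ (and symmetrically) baked into the proof of Lemma~\ref{lem:shrinking}. The bounded-reduction and subsequent inflation in the closed-to-open direction are comparatively routine.
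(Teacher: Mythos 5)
Your proposal is correct and follows essentially the same route as the paper: trim via Lemma~\ref{lem:shrinking} and take closures for the open-to-closed direction, and bound the sets before applying Lemma~\ref{lem:inflating} for the converse, with Proposition~\ref{prop:possiblerelations} doing the bookkeeping. The only differences are cosmetic (you intersect with a large ball containing codeword witnesses where the paper uses the convex hull of chosen witness points, and you spell out the Type B/C preservation in more detail); note the small index slip in your Type B case, where with $U_j\subset U_i$ the witnessing ball lives in $U_i\setminus U_j$, not $U_j\setminus U_i$.
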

\begin{proof}
$(\Rightarrow)$ Let  $\U$ be an open convex realization of $\C$.  Applying Lemma \ref{lem:shrinking}, there is an $\epsilon>0$ such that $\U' = \trim(\U, \epsilon)$ is an open realization of $\C$. Since the closure of each $U_i'$ is contained in $U_i$ (by Proposition \ref{prop:shrink}), $\U'$ is an open convex realization of $\C$ in which two sets intersect if and only if their closures do. Let $\V = \{\closure{U_1'},\ldots,\closure{U_n'}\}$. No triple intersections are present in $\V$ since these would correspond to triple intersections in $\U$.  Thus by Proposition \ref{prop:possiblerelations} it suffices to show that $\V$ preserves the relations between sets in $\U'$. Disjointness is preserved since sets in $\U$ intersect if and only if their closures do. Containment is preserved under taking closures. Lastly, proper intersection is preserved since if $U_i\setminus U_j$ is nonempty then there exist limit points of $U_i$ that are not limit points of $U_j$.  

$(\Leftarrow)$ Let $\V$ be a closed convex realization of $\C$. For every nonempty intersection $V_i\cap V_j$, let $p_{i,j}$ be a point in this intersection. Furthermore, if some set $V_i$ is not contained in any other $V_j$, let $p_i\in V_i\setminus \bigcup_{j\neq i}V_j$. Then set $V$ to be the convex hull of all these $p_i's$ and $p_{i,j}'s$. Replacing each $V_i$ by $V_i\cap V$ yields a realization of $\C$ in which every set is closed, convex, and bounded. Applying Lemma \ref{lem:inflating}, we obtain an open convex realization of $\C$ in $\R^d$.
\end{proof}
Although it may not be immediately clear from the proof, the condition that $\C$ is 2-sparse is necessary for Lemma \ref{lem:openvsclosed}. The 2-sparse condition is in fact best possible, since there exist 3-sparse codes which have closed convex realizations in $\R^2$, but for which open convex realizations exist only in $\R^3$ or higher. One such example is the code $$\C = \{0000,1000,0100,0010,0001, 1110, 1001, 0101, 0011\}$$ (see \cite{Shiu2015} for more details).  Even more strikingly, there exist codes with a closed convex realization in $\R^2$ that have no open convex realization in any dimension.  This emphasizes how special realizations of 2-sparse codes are.

We can now use the previous lemmas to relate the convexity of a 2-sparse code $\C$ to the convexity of its associated graph $G_\C$.  We first need a technical lemma.

\begin{lemma}\label{lem:boundarypt} Let $\U$ be an open convex realization of a 2-sparse code $\C$.  Then if $U_j\not\subset U_k$ for any $k\neq j$, there is a point $p\in \partial U_j \backslash \bigcup_{k\neq j} U_k$.
\end{lemma}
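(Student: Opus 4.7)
The plan is a proof by contradiction. Suppose $\partial U_j \subseteq \bigcup_{k \neq j} U_k$; I will deduce that $U_j \subseteq U_{k^*}$ for some $k^* \neq j$, contradicting the hypothesis. The argument will split into two steps: first extract a witness point $q \in U_j$ in no other $U_k$, and then parameterize $\partial U_j$ by rays from $q$ and run a connectedness argument on $S^{d-1}$.

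For the first step, I would observe that the hypothesis forces the codeword with support $\{j\}$ to be in $\C$. Indeed, for each $k \neq j$ with $U_j \cap U_k \neq \emptyset$, the set $U_j \cap U_k$ is open in $U_j$, and by 2-sparseness these sets are pairwise disjoint (overlapping ones would give a triple intersection, contradicting Definition \ref{def:sparse}). If they covered the convex, hence connected, set $U_j$, then exactly one could be nonempty and would equal $U_j$, forcing $U_j \subseteq U_k$ and contradicting the hypothesis. So some $q \in U_j$ lies in no other $U_k$.

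Next, assuming $U_j$ is bounded, I would parameterize $\partial U_j$ by rays from $q$: for $v \in S^{d-1}$, set $T(v) = \sup\{t \ge 0 : q + tv \in U_j\}$ and $p(v) = q + T(v)v \in \partial U_j$. The map $p\colon S^{d-1} \to \partial U_j$ is a continuous surjection. Define $S_k = p^{-1}(U_k)$. Each $S_k$ is open in $S^{d-1}$, and the $S_k$ are pairwise disjoint: a point $v \in S_k \cap S_l$ with $k \neq l$ would place $p(v)$ in the open set $U_k \cap U_l$, whose intersection with $U_j$ is nonempty (because $p(v) \in \partial U_j$), yielding a forbidden triple intersection. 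Under the contradiction assumption the $S_k$ cover $S^{d-1}$; connectedness of $S^{d-1}$ (for $d \ge 2$) then forces a single $S_{k^*} = S^{d-1}$, so $\partial U_j \subseteq U_{k^*}$. Since the compact convex body $\overline{U_j}$ equals the convex hull of its boundary, convexity of $U_{k^*}$ gives $\overline{U_j} \subseteq U_{k^*}$, hence $U_j \subseteq U_{k^*}$ --- the desired contradiction.

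The main obstacle is the implicit boundedness of $U_j$ used in the ray parameterization; without it the argument fails (for instance, $U_j = \{y > 0\}$ and $U_k = \{|y| < 1\}$ in $\R^2$ satisfy $U_j \not\subset U_k$ yet $\partial U_j \subseteq U_k$). I would address this by first replacing $\U$ with a bounded realization of $\C$, obtained via the construction in the proof of Lemma \ref{lem:openvsclosed}: intersect every $U_i$ with a sufficiently large open ball chosen to contain a representative of every codeword of $\C$ and a witness for each non-containment $U_j \not\subset U_k$, so the new realization has the same code and the same non-containment relations, and then apply the argument above.
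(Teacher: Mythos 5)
Your proof is correct, and it is in fact more careful than the paper's. The paper's own argument is the direct version of your idea: the sets $\partial U_j \cap U_k$ are pairwise disjoint (by 2-sparseness, via exactly your triple-intersection argument) and relatively open in $\partial U_j$, and the paper then concludes that they ``cannot partition $\partial U_j$ since $\partial U_j$ is connected.'' That one line hides the two issues you isolate. First, a single relatively open set \emph{can} cover a connected space, so one must separately rule out $\partial U_j \subseteq U_{k^*}$ for a single $k^*$; the paper's proof never actually invokes the hypothesis $U_j \not\subset U_k$, whereas your step $\overline{U_j} = \conv(\partial U_j) \subseteq U_{k^*}$ is precisely where that hypothesis must enter. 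Second, $\partial U_j$ need not be connected when $U_j$ is unbounded, and your half-plane/strip example is a genuine counterexample to the lemma as literally stated: the statement only holds after restricting to bounded sets (or after passing to a bounded realization, as you propose). Your radial parameterization of $\partial U_j$ by $S^{d-1}$ is a clean way to make the connectedness argument rigorous in the bounded case, and it also surfaces the restriction $d \ge 2$, which the paper needs silently since the lemma fails on the line. Two small remarks: the first step of your argument, producing a point $q \in U_j$ outside all other $U_k$, is never used afterwards --- any point of the open set $U_j$ serves as the center of the parameterization --- and your reduction establishes the corrected statement for the bounded realization rather than for the original $\U$ (a boundary point of $U_j \cap B$ lying on $\partial B$ need not lie on $\partial U_j$, and a point avoiding each $U_k \cap B$ need not avoid $U_k$); but since the literal statement is false for unbounded $\U$, proving the corrected version is the best one can do.
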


\begin{proof} Recall that for any set $U \subset \R^d$, $\partial U$ is the boundary of $U$.  Consider the sets $\{\partial U_j\cap U_k\}_{k\neq j}$.  These sets are disjoint: if not, then there exists $p \in (\partial U_j \cap U_k)\cap (\partial U_j \cap U_\ell)$. As $p\in U_k\cap U_\ell$, there exists $\epsilon>0$ with $B_\epsilon(p)\subseteq U_k\cap U_\ell$.  But then $B_\epsilon(p)\cap U_j\neq \emptyset$, as $p \in \partial U_j$, so $U_j\cap U_k\cap U_\ell\neq \emptyset$ contradicting that $\C$ is 2-sparse. \\
 
 Now, note that the disjoint sets $\{\partial U_j\cap U_k\}_{k\neq j}$ are open in the subspace topology with respect to $\partial U_j$, and hence they cannot partition $\partial U_j$ since $\partial U_j$ is connected. Thus, there exists $p\in \partial U_j\backslash \bigcup_{k\neq j} U_k$.
 \end{proof}

\begin{lemma}\label{lem:graphvscode}
Let $\C$ be a 2-sparse code and let $d\ge 2$. Then $\C$ has an open convex realization in $\R^d$ if and only if $\supp(\C)$ is closed under intersection and $G_\C$ has an open convex realization in $\R^d$.
\end{lemma}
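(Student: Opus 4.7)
The plan is to prove each direction by modifying a given realization, relying on intersection closure of $\supp(\C)$ as the key structural constraint. The underlying observation is that whenever $\{i\} \notin \supp(\C)$, intersection closure forces $i$ to lie in at most one edge of $G_\C$, for otherwise intersecting two edges through $i$ would yield $\{i\} \in \supp(\C)$. Thus the ``completed'' $2$-sparse code $\C_G$ associated to $G_\C$, whose support is $\{\emptyset\} \cup \{\{i\} : i \in [n]\} \cup E(G_\C)$ and which we identify with a realization of $G_\C$, differs from $\C$ only by singletons attached to vertices that are isolated or pendant in $G_\C$.

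For the $(\Rightarrow)$ direction, let $\U$ be an open convex realization of $\C$ in $\R^d$. Intersection closure follows immediately from Lemma~\ref{lem:intersectioncomplete}, so it remains to modify $\U$ into a realization of $\C_G$. I first trim $\U$ via Lemma~\ref{lem:shrinking} so that closures of pairwise disjoint sets remain disjoint and the union of closures is still a proper subset of $\R^d$. For each $i$ with $\{i\} \notin \supp(\C)$: if $i$ is isolated then $U_i$ is empty, and I replace it with a small open ball in the nonempty open set $\R^d \setminus \bigcup_k \closure{U_k}$, which is nonempty because $00\cdots 0 \in \C$. If $i$ lies in a unique edge $\{i,j\}$, then $U_i \subsetneq U_j$, and I adapt the argument of Lemma~\ref{lem:boundarypt} applied to the trimmed realization to find a point $p \in \partial U_j$ together with an open neighborhood disjoint from every $\closure{U_k}$, $k \neq j$; replacing $U_i$ with a small open ball centered at $p$ produces an open convex set that straddles $\partial U_j$, yielding the sticking-out region witnessing $\{i\}$ while preserving the intersection with $U_j$ and avoiding all other sets.

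For the $(\Leftarrow)$ direction, suppose $\U$ realizes $\C_G$ (equivalently, $G_\C$). For each $i$ with $\{i\} \notin \supp(\C)$, the structural observation again places $i$ in at most one edge of $G_\C$. If $i$ is isolated, set $U_i = \emptyset$. If $i$ is in the unique edge $\{i,j\}$, replace $U_i$ by the open convex set $U_i \cap U_j$, which is nonempty since $\{i,j\} \in \supp(\C) \subseteq \supp(\C_G)$. Proposition~\ref{prop:possiblerelations} confirms the modified collection realizes $\C$: the edge $\{i,j\}$ persists because $U_i \cap U_j$ still meets $U_j$, non-edges are preserved since shrinking cannot create intersections, and $U_i \cap U_j \subseteq U_j$ removes $\{i\}$ from the code while leaving witness points for every other singleton intact. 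The main technical obstacle is the boundary-neighborhood construction in $(\Rightarrow)$: Lemma~\ref{lem:boundarypt} alone yields only a point outside every $U_k$, and securing an open neighborhood of $p$ away from every $\closure{U_k}$ requires the preliminary trim of $\U$ to create geometric slack around $\partial U_j$.
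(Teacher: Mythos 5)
Your argument is correct and follows essentially the same route as the paper: Lemma~\ref{lem:intersectioncomplete} for intersection closure, a preliminary trim plus the connectedness/boundary-point argument to plant a small ball straddling $\partial U_j$ for the forward direction, and replacing $U_i$ by $U_i\cap U_j$ (iterated) for the converse. The one wrinkle you gloss over is that a pendant $i$ with unique neighbour $j$ gives only $U_i\subseteq U_j$, and when $U_i=U_j$ there is no point of $\partial U_j$ avoiding $\closure{U_i}$, so the point $p$ must be required to avoid $\closure{U_k}$ only for $k\neq i,j$ (the paper sidesteps this by treating $U_i=U_j$ as a separate easy case).
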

\begin{proof}
$(\Rightarrow)$ We know from Lemma \ref{lem:intersectioncomplete} that if $\C$ has a realization then its support is closed under intersection. We will show that given a realization $\U$ of $\C$, we can construct a realization of $G_\C$. 
Since $\C$ is 2-sparse, we know $\C$ and $\Delta(\C)$ have the same $2$-element sets in them. We will show that we can adjust the realization of $\C$ to obtain any singletons $\{i\}$ which appear in $\Delta(\C)$ but not in $\C$. 

Let $\{i\}\in \Delta(\C)\backslash \supp(\C)$.  If there exist $j,k$ such that $\{i,j\}$ and $\{i,k\}$ are both in $\supp(\C)$, then as $\supp(\C)$ is closed under intersection, we know $\{i\}\in \supp(\C)$.  Thus, there is exactly one $j$ such that $\{i,j\}\in \supp(\C)$.

 Note right away that in the realization $\U$ we have $U_i \subset U_j$ since $\{i,j\}$ is the only set in the support where $i$ appears. It suffices to transform $\U$ so that $U_i$ and $U_j$ intersect, but $U_i$ also contains points not in any other set in the realization.
 
  If $U_j\subset U_i$, then $U_i=U_j$, and we can replace $U_j$ with an open ball properly contained in $U_i$ to obtain the desired result.  Otherwise, $U_j$ may intersect many other sets in the realization, but cannot be contained in them, since this would imply a triple intersection between the containing set, $U_j$, and $U_i$.  Apply Lemma \ref{lem:shrinking} to obtain $\epsilon>0$ for which $\U' = \trim(\U,\epsilon)$ is an open realization of $\C$. Define the sets $V_k = \partial U'_j \cap \overline{U'_k}$; note that each $V_k$ is closed. Furthermore, these sets are disjoint, since if $p\in V_k\cap V_\ell$, then $p\in U_j\cap U_k \cap U_\ell$ in the original realization which is impossible for a 2-sparse code. Since $\partial U_j'$ is connected and the $V_k$ are disjoint closed sets, $\bigcup_{k\neq j} V_k\subsetneq \partial U'_j$; let $p\in \partial U'_j\backslash \bigcup_{k\neq j} V_k$.  Then $p$ has positive distance to all sets $U'_k$ with $k\neq j$ so there is some $\epsilon'>0$ with $B_{\epsilon'}(p)\cap U'_k = \emptyset$ for all $k\neq j$.  Replacing $U_i'$ with $B_{\epsilon'}(p)$ will create a realization of a code $\C'$ with $\supp(\C') = \supp C \cup \{i\}$.  Repeating this step as many times as necessary, we obtain a realization of $\Delta(\C)$.
  
$(\Leftarrow)$ Suppose $\U$ is an open convex realization of $\Delta(\C)$.  Note that if $\{i,j\} \in \supp(\Delta(\C))$, it is also in $\supp(\C)$ since $\C$ is 2-sparse.  Now, suppose $\{i\} \in \supp(\Delta(\C))\backslash \supp(\C)$.  Then there is at most one $j\neq i$ such that $\{i,j\}\in \supp(\C)$ as $\C$ is closed under intersection. If there is such a $j$, replace $U_i$ with $U_i\cap U_j$ which is an open convex set; if there is no such $j$, then remove $U_j$ entirely. This gives a convex realization of $\Delta(\C)\backslash\{i\}$, and we can repeat this operation as many times as necessary to obtain a realization of $\C$.
\end{proof}

The above lemma can be summarized as follows: realizing a 2-sparse code and realizing its simplicial complex  are equivalent, as long as $\supp(\C)$ is closed under intersection. This equivalence is our main tool in proving Theorem \ref{thm:R3} and obtaining a complete classification of which 2-sparse codes are convex in $\R^3$. 

\begin{proof}[Proof of Theorem \ref{thm:R3}]
$(\Rightarrow)$ The fact that $\supp(\C)$ is closed under intersection follows directly from Lemma \ref{lem:intersectioncomplete}.

$(\Leftarrow)$ Since $\supp(\C)$ is closed under intersection, we know by Lemma \ref{lem:graphvscode} that it is equivalent to find a realization for $G_\C$. But $\C$ is 2-sparse, and so Lemma \ref{lem:openvsclosed} tells us that it suffices to find a closed convex realization for $G_\C$. The graph $G_\C$ is a 1-dimensional simplicial complex. An augmentation of construction of  Tancer \cite{tancer} (see the proof of Theorem 3.1 therein) leads to a realization of a $1$-dimensional simplicial complex in $\R^{3}$. 
This proves the desired result.
\end{proof}

Theorem~\ref{thm:R3} makes it very straightforward to check whether a 2-sparse code has an open convex realization in $\R^3$. The challenge that lies ahead is determining the minimal embedding dimension for a given 2-sparse code.  We begin investigating this problem in the next section.

\section{Realization results}

Throughout this discussion we will often refer to ``realizations'' of a graph. It is important to note that while a graph is the intersection graph of its realization, finding convex sets whose intersection graph is the graph of concern is not always sufficient. In particular, if a collection of convex sets has a triple with nonempty intersection then it is not a realization of a graph, since graphs only encode intersections of order two. 

In this section, we begin the program of classifying 2-sparse codes based on minimal embedding dimension.  Recall from Lemma~\ref{lem:graphvscode} that realizing a 2-sparse code $\C$ is equivalent to realizing its graph $G_\C$, so throughout this section we refer to realizing $G_\C$ rather than $\C$ itself.  Our main contribution is that it is difficult to determine graph theoretic criteria on $G_\C$ that predict $d(\C)$.  Topological properties inherent to $G_\C$ seem to potentially influence $d(\C)$: for instance in Proposition~\ref{prop:R2}, we observe $d(\C)=2$ if $G_\C$ is planar and in Proposition~\ref{prop:nonplanar} if $G_\C$ is not planar one can construct a related graph whose code has minimal embedding dimension $3$.  However, planarity does not strictly govern minimal embedding dimension, as all complete and complete bipartite graphs are realizable in $\R^2$. To start, our first proposition establishes that specific ubiquitous graphs have open convex realizations in $\R^2$.

\begin{proposition}\label{prop:R2}
The following graphs have an open convex realization in $\R^2$:
\begin{enumerate}
\item planar graphs,
\item the complete $k$-partite graph $K_{n_1,n_2,\ldots,n_k}$ with part sizes $n_1,n_2,\ldots,n_k$
\item any graph $G$ with vertex set $\{v_1,v_2,\ldots,v_n,u_1,\ldots,u_k\}$ where the induced subgraph on the vertices $v_1,v_2,\ldots,v_n$ is complete and $\{v_1,v_2,\ldots,v_n\} \supseteq N_G(u_k) \supseteq N_G(u_{k-1}) \supseteq \cdots \supseteq N_G(u_1)$.
\end{enumerate}
\end{proposition}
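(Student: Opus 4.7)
My plan is to build explicit open convex realizations in each case, with the central technical issue throughout being the suppression of triple intersections (required for $2$-sparsity) while preserving all required pairwise intersections. For part (1), I start with a Fáry straight-line embedding of $G$, placing vertex $v_i$ at $p_i\in\R^2$. For each edge $e=\{v_i,v_j\}$ fix a small open ball $M_e$ of radius $\delta$ about its midpoint, and set
\[U_{v_i} \od \interior\!\left(\conv\!\left(\{p_i\}\cup\bigcup_{e\ni v_i}\closure{M_e}\right)\right).\]
Each $U_{v_i}$ is open and convex, and for every edge $\{v_i,v_j\}$ both hulls contain $M_e$. The task is to choose $\delta$ small enough that simultaneously (i) non-edges have disjoint half-star hulls, and (ii) on every triangle of $G$ the three pairwise overlaps cluster near three distinct edge midpoints so that the triple intersection is empty; these are finitely many separation inequalities on a fixed drawing, so a uniform $\delta$ exists.

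For part (2), pick $k$ distinct directions $\theta_1,\ldots,\theta_k$ in $\R^2$ and represent part $i$ by $n_i$ long thin rectangles oriented in direction $\theta_i$, spaced apart so that same-part rectangles are disjoint. Choose the overall placement so the centerlines of all $n_1+\cdots+n_k$ rectangles are in general position; then for sufficiently thin rectangles every cross-part pair meets in a small parallelogram near the corresponding centerline crossing, while no three rectangles from three different parts share a common point.

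For part (3), relabel the $v_i$'s so that each $N_G(u_j)$ is an initial segment $\{v_1,\ldots,v_{m_j}\}$ — possible precisely because the neighborhoods are nested — and realize $K_n$ on $\{v_1,\ldots,v_n\}$ by $n$ thin strips emanating from near the origin in angular order $\theta_1<\theta_2<\cdots<\theta_n$, perturbed so no three centerlines are concurrent and so all pairwise $v$-crossings lie inside a small disk $B$ around the origin. For each $u_j$ take $U_{u_j}$ to be an open convex ``wedge-triangle'' with vertices at radii exceeding that of $B$, spanning exactly the angular range $[\theta_1,\theta_{m_j}]$; this triangle crosses precisely the strips in $N_G(u_j)$, stays outside $B$ so it avoids all pairwise $v$-crossings (killing triple intersections with two $v$-strips), and by placing successive $U_{u_j}$'s in disjoint annular shells the $u_j$-triangles themselves remain pairwise disjoint, as required since the $u_j$'s form an independent set.

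The main obstacle is triple-intersection control: in part (1) a single $\delta$ must simultaneously beat every pairwise and triangle-level estimate on the fixed Fáry drawing; in part (3) each $u_j$-probe must be routed through exactly the right strips while avoiding the crossing region $B$. The nesting hypothesis in part (3) is essential, since it reduces the task to realizing prefixes of a linearly ordered list of strips — a task a single convex wedge-triangle can carry out — rather than arbitrary subsets, which in general cannot be grabbed by one convex set in $\R^2$.
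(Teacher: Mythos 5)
Your constructions for parts (2) and (3) are sound: part (2) is essentially the paper's argument (the paper uses segments through pairwise-crossing lines and parallel translates within each part, then converts the closed realization to an open one via Lemma~\ref{lem:openvsclosed}), and part (3), while different from the paper's construction (which extends the $K_n$ segments downward and adds horizontal probe segments on the lines $y=s$), is a legitimate alternative provided you add the explicit hypothesis that all strip directions lie in an angular sector of width less than $\pi$, so that a chord-like convex wedge at radius $r$ really does avoid the crossing disk $B$.

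Part (1), however, has a genuine gap. Your set $U_{v_i}$ is the interior of $\conv\bigl(\{p_i\}\cup\bigcup_{e\ni v_i}\closure{M_e}\bigr)$, and as $\delta\to 0$ this does \emph{not} shrink to the union of the half-edges from $p_i$ to its incident edge midpoints; it shrinks to the convex hull of $p_i$ and those midpoints, which is a fixed two-dimensional polygon independent of $\delta$. That polygon can contain non-adjacent vertices and can overlap the analogous polygon of a non-neighbor, in which case no choice of $\delta$ separates them and your ``finitely many separation inequalities'' claim fails. Concretely: take the planar straight-line drawing with $a=(-1,1)$, $b=(0,0)$, $c=(1,1)$, $d=(0,0.4)$ and edges $ab$, $bc$, $ad$, $cd$ (no crossings). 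The half-star hull at $b$ is the triangle on $(0,0)$, $(-0.5,0.5)$, $(0.5,0.5)$, whose interior contains $d$ and hence meets $U_d$ for every $\delta>0$, even though $b$ and $d$ are not adjacent. Since the problem is caused by the convexification of a non-convex star, shrinking $\delta$ cannot repair it; one would have to choose the drawing itself very carefully, and you give no argument that a suitable drawing exists for every planar graph. The paper sidesteps this entirely by invoking the Circle Packing Theorem: each vertex becomes a closed disk, disks are tangent exactly along edges and disjoint otherwise, so there are no triple intersections and every disk has private area; Lemma~\ref{lem:openvsclosed} then yields an open realization. You should replace your part (1) argument with this (or some other construction whose sets stay genuinely localized near the vertices).
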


\begin{proof}
In all cases, we find a closed convex realization of the given graph $G$, which by Lemma~\ref{lem:openvsclosed} implies the existence of an open convex realization.  For (1), we first recall the Circle Packing Theorem 
which says that for any planar graph $G$ with vertex set $\{v_1,\ldots,v_n\}$, there exist disjoint disks $C_1,C_2,\ldots,C_n$ in $\R^2$ such that $C_i$ is tangent to $C_j$ if and only if $v_i$ is adjacent to $v_j$, and $C_i \cap C_j = \emptyset$ otherwise.  See Figure \ref{fig:planargraphs} for an illustration of how these disks are constructed.  

\begin{figure}[h]\label{fig:circlepacking}
\centering
 \includegraphics[width=1.5in]{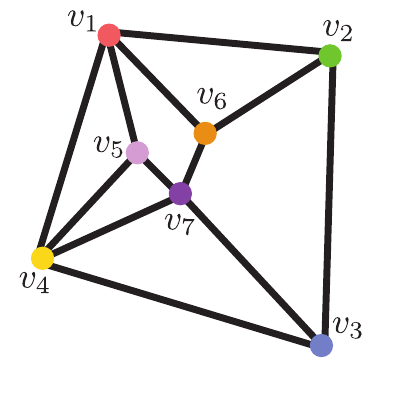} \hspace{1in}
  \includegraphics[width=1.8in]{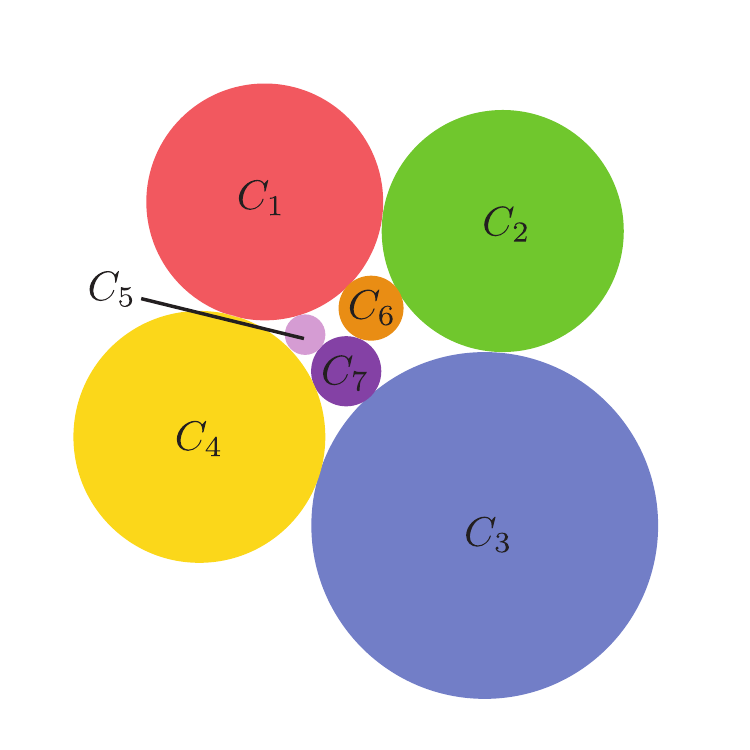}
\caption{A planar graph $G$ and the corresponding closed realization using the Circle Packing Theorem.}
\label{fig:planargraphs}
\end{figure}

For (2), we first find a realization for the complete graph $K_n=K_{1,1,\ldots,1}$ ($n$ copies of $1$ here).  Consider the line segments $\ell_1,\ell_2,\ldots,\ell_n$ where $\ell_i$ has endpoints $(i,0),(0,n+1-i)$, and observe that $\ell_i \cap \ell_j \neq \emptyset$ for any $i \neq j$.  Moreover, no three of these lines are concurrent.  This gives a closed convex realization of $K_n$.  Now to realize $K_{n_1,n_2,\ldots,n_k}$, start with a closed convex realization of $K_k$ as constructed in the realization of (2).  Replace each line segment $\ell_i$ with $n_i$ disjoint parallel translates of $\ell_i$ that are arbitrarily close in distance to $\ell_i$, and call these segments $s_{i1},s_{i2},\ldots,s_{in_i}$.  Observe that by construction, $s_{ij} \cap s_{ij'} = \emptyset$ for any $j \neq j'$.  Moreover, $s_{ij} \cap s_{i'j'} \neq \emptyset$ for $i \neq i'$ because $l_i \cap l_{i'} \neq \emptyset$ and $s_{ij}$ and $s_{i'j'}$ are arbitrarily close and parallel to $\ell_i$ and $\ell_{i'}$ respectively.  Moreover, if any three line segments $s_{ij},s_{i'j'},s_{i''j''}$ had a point in common, then $l_i,l_{i'},l_{i''}$ would, which they don't.  Hence the union of the sets $\{s_{i1},s_{i2},\ldots,s_{in_i}\}_{i=1}^k$ gives a closed convex realization of $K_{n_1,n_2,\ldots,n_k}$.  See Figure \ref{fig:completegraphs} for examples of the constructions in the proof of (2).  
\begin{figure}[h]
\includegraphics[width=1.8in]{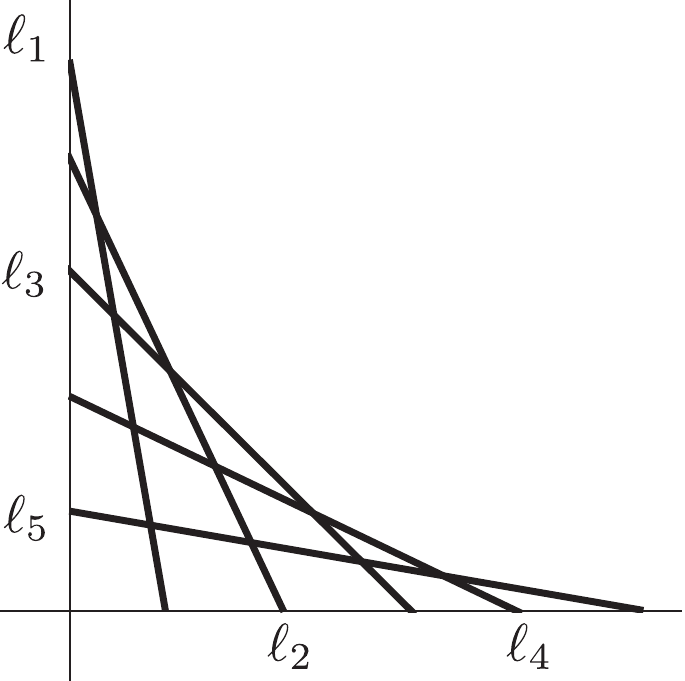}\hspace{1in} \includegraphics[width=2in]{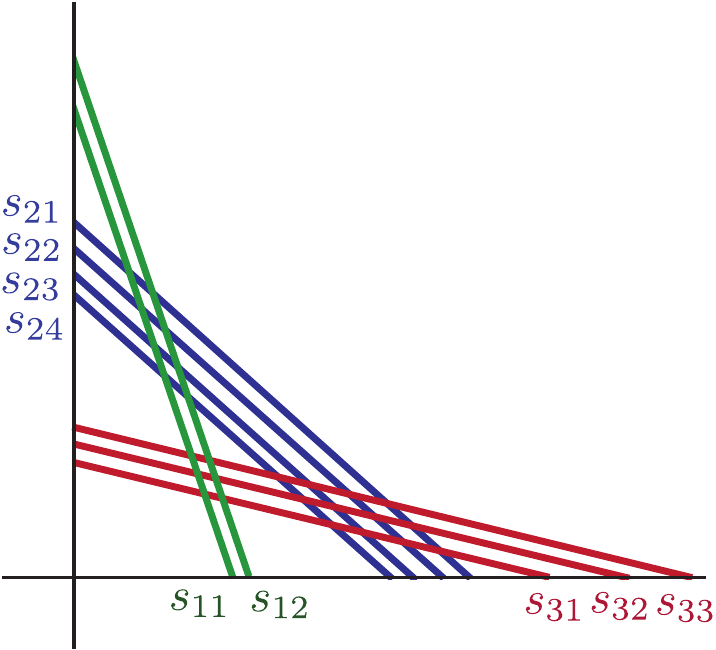} 
\caption{A closed convex realization of $K_5$ (left) and a closed convex realization of $K_{2,4,3}$ (right), as constructed in the proof of Proposition \ref{prop:R2} }\label{fig:completegraphs}
\end{figure}
It remains to prove (3).  Without loss of generality, we assume $N_G(u_k)=\{v_1,v_2,\ldots,v_r\}$, indexed in such a way that each set $N_G(u_j)$ is $\{v_1,v_2,\ldots,v_s\}$ for some $s$. To realize $G$, first start with a realization of $K_n$ as in the proof of (2), where $v_j$ is represented by $\ell_j$ for each $j$.  Now, extend each line segment $\ell_j$ for $1 \leq j \leq r$ so that $(0,j)$ remains as an endpoint, the slope remains the same, but the lower endpoint has $y$-coordinate $-k$.  Then, for each $s$ with $1 \leq s \leq k$, introduce a line segment $\ell'_s$ that lies on the line in the $xy$-plane given by $y=s$, and only intersects the line segments in the set $\{\ell'_j \ | \ j \in N_G(u_s)\}$.  The line segments $\ell_1,\ldots,\ell_n,\ell'_1,\ldots,\ell'_k$ give a closed realization of $G$.  See Figure \ref{fig:extendedcomplete} for an example of this construction.
\end{proof}

\begin{figure}[h]
\centering
\includegraphics[width=1.8in]{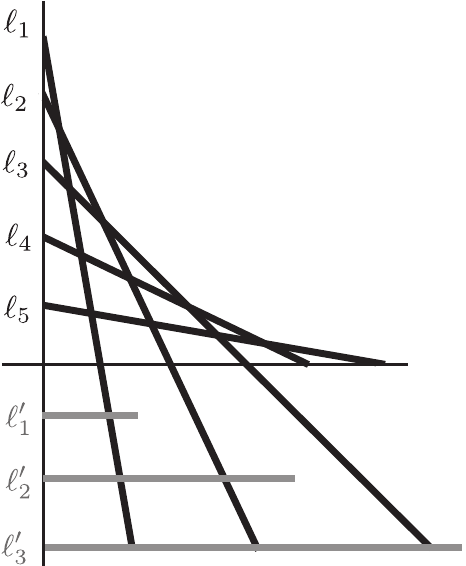}
\caption{A closed convex realization of the graph $G$  with vertices $v_1,v_2, v_3, v_4,v_5, u_1,u_2,u_3$ where the induced graph on $v_1,...,v_5$ is complete, and $N(u_3) = \{v_1,v_2,v_3\}$, $N(u_2) = \{v_1,v_2\}$ and $N(u_1) = \{v_1\}$.  }\label{fig:extendedcomplete}
\end{figure}

Thus far, we have exhibited classes of graphs that can be realized in $\R^2$, which included graphs $G_\C$ that are planar.  We know show how to augment any nonplanar graph to create a graph that can not be realized in $\R^2$.

\begin{proposition}\label{prop:nonplanar} Let $G$ be a nonplanar graph.  Let $G'$ be the graph obtained from $G$ by replacing each edge $v_i v_j$ by a length 2 path $v_i, v_{ij}, v_j$ (we refer to this as the \emph{edge subdivision} of $G$ throughout).  Then $G'$ does not have an open convex realization in $\R^2$, and hence its minimal embedding dimension is $3$.
\end{proposition}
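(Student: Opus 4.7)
The plan is to prove the contrapositive. Assume $G'$ has an open convex realization $\U = \{U_i\}_{v_i \in V(G)} \cup \{U_{ij}\}_{v_iv_j \in E(G)}$ in $\R^2$; I would construct a planar embedding of $G$ from $\U$, contradicting the nonplanarity hypothesis. The embedding-dimension half then follows at once: the code $\C$ with $G_\C = G'$ is $2$-sparse and $\supp(\C)$ is closed under intersection (for instance, $\{i,v_{ij}\}\cap\{j,v_{ij}\}=\{v_{ij}\}$ and $\{i,v_{ij}\}\cap\{i,v_{ik}\}=\{i\}$ all appear in $\supp(\C)$), so Theorem~\ref{thm:R3} yields $d(\C) \leq 3$, and the $\R^2$ obstruction then forces $d(\C) = 3$. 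I may assume without loss of generality that $G$ has minimum degree at least $2$, since leaves and isolated vertices do not affect planarity.

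For each $v_i$, intersection-closedness puts $\{i\}$ in $\supp(\C)$, so I can choose $x_i \in U_i$ lying in no other set of $\U$. For each edge $e = v_iv_j$ of $G$, pick $p_e \in U_i \cap U_{ij}$ and $q_e \in U_j \cap U_{ij}$; by convexity the segment $[p_e,q_e]$ lies in $U_{ij}$. The planar drawing will send $v_i \mapsto x_i$ and $e \mapsto \gamma_e := \alpha_i^e \cup [p_e,q_e] \cup \alpha_j^e$, where $\alpha_i^e \subseteq U_i$ is a simple arc from $x_i$ to $p_e$ and $\alpha_j^e \subseteq U_j$ is a simple arc from $q_e$ to $x_j$, to be chosen carefully.

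The critical structural fact driving non-crossing is that each $U_{ij}$ meets only $U_i$ and $U_j$ among the sets of $\U$. Consequently, for distinct edges $e \neq f$ of $G$ the tubes $U_i \cup U_{ij} \cup U_j$ and $U_k \cup U_{k\ell} \cup U_\ell$ are disjoint when $e,f$ share no vertex, and otherwise overlap only in the shared vertex region $U_m$. Any crossing between $\gamma_e$ and $\gamma_f$ must therefore occur inside such a $U_m$ with $e,f$ both incident to $v_m$. Inside $U_m$, the sets $\{U_m \cap U_{mk} : v_mv_k \in E(G)\}$ are pairwise disjoint open convex ``tentacles'' (disjoint because the $U_{mk}$ themselves are), and $x_m$ sits outside all of them in the open convex, hence simply connected, region $U_m$. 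A standard topological spider construction then produces simple arcs $\alpha_m^e$ in $U_m$, one per edge $e$ incident to $v_m$, meeting pairwise only at $x_m$, with each $\alpha_m^e$ entering only its own target tentacle $U_m \cap U_{mj}$ at $p_e$ and avoiding all the others. With these choices, the portion of $\gamma_f$ lying in $U_m$ consists of $\alpha_m^f$ together with a prefix of $[p_f,q_f]$ contained in $U_m \cap U_{mf}$, both of which $\alpha_m^e$ was built to avoid; thus $\gamma_e \cap \gamma_f = \{x_m\}$, and the $\gamma_e$ form a planar embedding of $G$.

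The main technical obstacle is justifying the spider construction rigorously. A clean route is to first invoke Lemma~\ref{lem:shrinking} to shrink $\U$ slightly so that, for each $v_m$, the closures $\closure{U_m \cap U_{mf}}$ are pairwise disjoint compact convex subsets of $U_m$, each separated from $x_m$; then inside the open, simply connected, ``punctured'' region $U_m \setminus \bigcup_{f \ni v_m} \closure{U_m \cap U_{mf}}$ one can draw pairwise disjoint simple arcs from $x_m$ to a boundary point of each tentacle (for example by arranging them in any cyclic order around $x_m$), and append a short segment inside each tentacle to land on the prescribed $p_e$.
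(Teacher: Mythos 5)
Your overall strategy---routing each edge $v_iv_j$ of $G$ through the tube $U_i\cup U_{ij}\cup U_j$ and checking that distinct edge-curves meet only at shared endpoints, then quoting Theorem~\ref{thm:R3} for the upper bound---is exactly the paper's. But the step you yourself single out as the main technical obstacle, the spider construction inside $U_m$, has a genuine gap. You want each arc $\alpha_m^e$ to live in $U_m\setminus\bigcup_{f\ni v_m}\closure{U_m\cap U_{mf}}$ and reach $p_e$, and you justify this by treating that punctured region as connected (indeed ``simply connected''). It need not be: since $v_{mf}$ is adjacent to $v_f$ in $G'$ and $U_f\cap U_m=\emptyset$, the set $U_{mf}$ is never contained in $U_m$, and it can pass all the way through $U_m$. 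Concretely, let $U_m$ be the open unit disk, $U_{mf}$ a thin open rectangle crossing the entire disk at height $1/2$, $U_{me}$ a small disk overlapping $U_m$ only above height $0.6$, and $x_m$ near the bottom of $U_m$; all the disjointness required by $G'$ can be arranged. Then every path in $U_m$ from $x_m$ to $p_e\in U_m\cap U_{me}$ must pass through height $1/2$ inside the disk, hence must enter the open tentacle $U_m\cap U_{mf}$. So no arc $\alpha_m^e$ avoiding the other tentacles exists, the cyclic-order argument has nothing to order, and your stated reason that $\gamma_e\cap\gamma_f=\{x_m\}$ collapses. Trimming via Lemma~\ref{lem:shrinking} does not repair this: a through-crossing survives a small trim.

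The repair is essentially what the paper does: do not ask the arcs inside $U_m$ to avoid the tentacles at all. For the edge $e=v_mv_j$, take $x_{mj}\in U_m\cap U_{mj}$ and $x_{jm}\in U_j\cap U_{mj}$, and let $p_{mj}$ be the point where the segment $x_{mj}x_{jm}$ crosses $\partial U_m$; route the edge as the three straight segments $x_m p_{mj}$, $p_{mj}p_{jm}$, $p_{jm}x_j$. The middle segment then lies in $U_{mj}\setminus U_m$ (by convexity of $U_m$ it never re-enters), so the only portions of two edge-curves meeting $\closure{U_m}$ are two straight segments emanating from the common point $x_m$, which intersect only at $x_m$ (after perturbing away the degenerate collinear case). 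Convexity does all the work and no connectivity of a punctured region is needed. A weaker patch of your construction is also possible---require $\alpha_m^e$ to avoid only the finitely many half-open segments $[p_f,q_f]\cap U_m$ rather than the full tentacles, since such slits from an interior point to $\partial U_m$ do not disconnect $U_m$---but that is no longer the argument you wrote. Your derivation of $d(\C)\le 3$ from intersection-closedness and Theorem~\ref{thm:R3} is fine.
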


\begin{proof}
Suppose by contradiction that $G'$ has an open convex realization in $\R^2$. Let the graph $G$ have vertex set $\{v_1,v_2,..,v_n\}$, so $G'$ has as its vertices $\{v_i \ | \ i=1,...,n\}$ together with vertices $\{v_{ij} \ | \ v_iv_j\in E(G)\}$ where for any $i,j$, $v_{ij}$ is adjacent only to $v_i$ and $v_j$.  Suppose the open convex realization $\mathcal{U}$ of $G'$ consists of the sets $\{U_i\}$ and $\{U_{ij}\}$ where for any $i$, $U_i$ is the open convex set corresponding to $v_i$, and for any $i \neq j$ with $v_iv_j\in E(G)$, $U_{ij}$ is the open convex set corresponding to $v_{ij}$.

Fix $i,j$ such that $v_i$ and $v_j$ are adjacent in $G$. Let $p_i$ and $p_j$ be points in $U_i$ and $U_j$ respectively, that do not lie in any other sets in $\U$. Since $U_i \cap U_{ij}$ and $U_j \cap U_{ji}$ are nonempty, we can find points $x_{ij}$ and $x_{ji}$ in $U_i \cap U_{ij}$ and $U_j \cap U_{ij}$, respectively. Let $x_{ij}x_{ji}$ intersect $\partial U_i$ and $\partial U_j$ at point $p_{ij}$ and $p_{ji}$, respectively.  Define the path $P_{ij}$ from $p_i$ to $p_j$ by concatenating the line segments $p_ip_{ij}$, $p_{ij}p_{ji}$, and $p_{ji}p_j$ in that order.

Now additionally fix indices $k,l$.  We claim that the two different paths $P_{ij}$, $P_{kl}$ can only intersect at the points $p_i$,$p_j$,$p_k$ or $p_l$, if anywhere.  To do so, it is enough to show that among any pair of line segments, one chosen from $\{p_ip_{ij},p_{ij}p_{ji},p_{ji}p_j\}$ and one from $\{p_kp_{kl},p_{kl}p_{lk},p_{lk}p_l\}$,  their intersection (if it exists), must be one of the points $p_i$,$p_j$,$p_k$ or $p_l$.  We split this into three cases:

First, consider the intersection of $p_ip_{ij}$ and $p_kp_{kl}$.  If $i=k$ then the two segments can only intersect at $p_i$, unless $j=l$, in which case the segments were the same segments to begin with. If $i \neq k$, then observe that $p_ip_{ij} \in U_i$, $p_kp_{kl} \in U_k$ and $U_i \cap U_k$ is empty because $v_i$ and $v_k$ are not adjacent in $G'$. A similar argument establishes our desired result when the pair of segments in question are $\{p_ip_{ij} , p_{kl}p_k\}$, $\{p_{ij}p_i, p_{kl}p_k\}$ and $\{p_{ij}p_i, p_{k}p_{kl}\}$. 

Second, consider the intersection of $p_{ij}p_{ji}$ and $p_{kl}p_{lk}$. Notice that $p_{ij}p_{ji} \subseteq U_{ij}$ and $p_{kl}p_{lk} \subseteq U_{kl}$. Since $v_{ij}$ and $v_{kl}$ are not adjacent in $G'$, $U_{ij} \cap U_{kl}$ is empty, so the two paths in question can not intersect.

Finally, consider the intersection of $p_{i}p_{ij}$ and $p_{kl}p_{lk}$. Suppose that $i=k$.  When $j=l$, the segments in question are $p_ip_{ij}$, $p_{ij}p_{ji}$ but these are from the same path $P_{ij}$ so we need not consider this situation. When $j \neq l$, $p_{i}p_{ij} \subseteq U_i \cup U_{ij}$, and $p_{il}p_{li} \subseteq U_{il}\backslash U_i$. Since $j \neq l$, $U_{ij} \cap U_{il} = \emptyset$, and hence $(U_i \cup U_{ij}) \cap (U_{il}\backslash U_i) = \emptyset$, so the two segments in question do not intersect.  A similar argument establishes the result when $j = l$.  It remains to establish the desired result when $i \neq l,k$. Suppose for a contradiction that $p_ip_{ij}$ intersects $p_{kl}p_{kl}$. Since $p_ip_{ij} \subseteq U_i \cup \partial U_i$, and $p_{kl}p_{lk} \subseteq U_{lk}$, this implies $(U_i \cup \partial U_i) \cap U_{lk}$ is nonempty. However, this is impossible because $U_i \cap U_{lk} = \emptyset$ (because $v_i$ and $v_{lk}$ are not adjacent in $G'$) and $\partial U_i \cap U_{lk} = \emptyset$.

The above argument establishes that two distinct paths $P_{ij},P_{kl}$ can only intersect at their endpoints.  Construct a graph $G''$ on the same vertex set as $G$ with two vertices adjacent precisely when they are adjacent in $G$, but with each edge $v_iv_j$ drawn precisely along the path $P_{ij}$.  The graph $G''$ is a planar embedding of $G$, contradicting $G$ is planar.
\end{proof}

\section{Future directions}
This paper initiated the program of studying $k$-sparse codes with a full characterization of the structure of $2$-sparse codes.  Section 2 was dedicated to a topological and analytic investigation of such codes in order to achieve a full characterization through Theorem~\ref{thm:R3}, which additionally told us that any realizable $2$-sparse code has minimal embedding dimension at most $3$.  Section 3 then began the study of differentiating $2$-sparse codes by embedding dimension through Proposition~\ref{prop:R2} and Proposition~\ref{prop:nonplanar}.  The most pressing questions are how these investigations generalize when $k>2$.

\begin{question} For a particular $k$, how can we characterize which k-sparse codes are realizable? More specifically, given a positive integer $\ell$, for which $k$-sparse codes is $d(\C)=\ell$?  
\end{question} 

In investigating the minimum embedding dimension of a $k$-sparse code, certain dimension bounds can be used.  For example, suppose $\C$ is a $k$-sparse code with $\Delta = \Delta(\C)$, and let $f_d(\Delta)$ be the number of codewords in $\Delta$ with support size $d+1$.  Then, by applying the Fractional Helly theorem, we find $k>f_d(\Delta)/\binom{n-1}{d}$; this was noted in \cite{Youngs2015}.  Similar to this, many known bounds rely solely on the combinatorial information in the code and in particular the simplicial complex $\Delta(\C)$.  However, in addressing the question of whether a $k$-sparse code is realizable at all, an investigation into the topology of the matter can provide insight beyond what is apparent from the combinatorics.  This is especially evident from the developments in Section~\ref{sec:topological}.  The key idea there was shifting from one realization of a code to another by shrinking or expanding sets.  The question then for $k$-sparse codes for $k>2$ is what analogous topological operations to realizations preserve the underlying code.

\begin{question}
Given a convex realization $\U=\{U_1,...,U_n\}$ of a code $\C$ in $\R^d$, what topological maps can be applied to the sets $U_i$ so that the resulting sets still form a convex realization of $\C$?
\end{question}

\section*{Acknowledgments}
\noindent The authors thank Dean's Office and the Department of Mathematics at Harvey Mudd College for their summer research support, and thank Carina Curto, Chad Giusti, Elizabeth Gross, Vladimir Itskov, Bill Kronholm and Anne Shiu for fruitful conversations.

\end{document}